\theoremstyle{definition}
\newtheorem{theorem}{Theorem}
\newtheorem{lemma}{Lemma}
\newtheorem{corollary}{Corollary}
\newtheorem{definition}{Definition}
\newtheorem{remark}{Remark}
\newtheorem{example}{Example}
\def\Ds{\displaystyle}
\newcommand{\Rmnum}[1]{\expandafter\@slowromancap\romannumeral #1@}
\begin{document}

\title{Characterization of Collective Behaviors for Directed Signed Networks}
\author{Wen~Du,~Deyuan~Meng,~\IEEEmembership{Senior~Member,~IEEE},~and~Mingjun~Du

\thanks{Wen Du is with Department of Electrical Engineering, University of North Texas, Denton, TX 76201, USA (Email: wendu@my.unt.edu).}
\thanks{Deyuan Meng (corresponding author) is with the Seventh Research Division, Beihang University (BUAA), Beijing 100191, P. R. China, and also with the School of Automation Science and Electrical Engineering, Beihang University (BUAA), Beijing 100191, P. R. China (Email: dymeng@buaa.edu.cn).}
\thanks{Mingjun Du is with the School of Electrical Engineering and Automation, Qilu University of Technology (Shandong Academy of Science), Jinan, Shandong Province 250353, P. R. China. (Email: dumingjun0421@163.com).}}

\maketitle

\begin{abstract}
This paper targets at exploring how to charactering collective behaviors of directed signed networks. 
The right eigenvector of the Laplacian matrix associated with zero eigenvalue is further investigated and its mathematical expression is proposed. 
It is shown that the right eigenvector plays an important role in determining the collective behaviors of directed signed networks.  
Furthermore,  algebraic criteria are introduced for collective behaviors of directed signed networks, such as bipartite consensus, interval bipartite consensus and bipartite containment tracking. 
In addition, a simulation example is given to the correctness of our developed theoretical results.

\end{abstract}

\begin{IEEEkeywords}
Collective behavior, right eigenvector, signed network, structural balance, structurally balanced node.
\end{IEEEkeywords}

\IEEEpeerreviewmaketitle

\section{Introduction}

The distributed coordination of unsigned networks has received considerable attentions because of its wide applications in many fields, such as multi-mobile robots, unmanned air vehicles (UAVs), autonomous underwater vehicles (AUVs) (see \cite{Lin04} for more details). 
Among those applications, the cooperative control has been widely used for distributed coordination of unsigned networks. 
The consensus problem is one of the most fundamental problems in the studies of the cooperative control.
It is defined as  that a group of agents cooperating with each other  achieve a common decision by exploiting the information  collected from their neighbors.

Fruitful results of consensus problems have been developed in the past decades \cite{Cao08,Yang12,Yang14,Char15}. 
For unsigned networks with first-order integrator dynamics,  communication topologies play an important role in analyzing consensus problems, where the strong connectivity \cite{Olfa04} and quasi-strong connectivity (or containing a spanning tree) \cite{Ren05}  ensure the consensus. 
In \cite{Li09}, the algebraic criteria have been given for identifying the communication topologies of unsigned networks, in which the left eigenvector of the Laplacian matrix associated with the zero eigenvalue has been employed.
 Besides, leader-follower consensus needs to be mentioned, that is, if there exists only one leader in the network,  each follower tracks the trajectory of leader's \cite{Hu07,Ni10,Su13}. 
However, leader-follower consensus fails when there exist multiple leaders in the unsigned networks. 
Toward this problem, the containment tracking control  has been introduced in \cite{Ji08,Cao12,Li16}, in which multiple leaders are allowed and followers converge to the convex hull spanned by leaders.

All of these results are under the assumption that all agents of networks cooperate with each other. However, there  exist many networked systems in the real world, such as market, biological systems and social networks, that does not always consist of cooperating agents. 
Antagonistic connections (such as contraversy, disagreement, dislike and conflict) among agents also exist\cite{Easl12,Facc11,Wass94}. These kinds of networked systems are named as signed networks. 
Signed digraphs associated with positive and negative adjacency weights are proposed to depict the  signed networks with cooperative and antagonistic connections among agents.

In signed networks, the traditional framework of exploring consensus problems is not suitable anymore because of the existence of antagonistic connections. 
Instead, bipartite consensus, which is defined as that  final values of  agent states are the same in module but opposite in sign, is considered when polarization phenomena exist in signed networks. 
Bipartite consensus problems have been investigated in \cite{Alta13}. 
It is shown that for signed networks under strongly connected signed digraphs, bipartite consensus  can be achieved if the signed digraph is structurally balanced, and the state stability is achieved, otherwise. 
Motivated by the results of \cite{Alta13},  bipartite consensus problems have  been studied for signed networks subject to general linear dynamics \cite{Valc14,Qin17,Jiang17}, with communication noises \cite{Hu19} and under dynamic topologies \cite{Meng18}. 
Besides, in \cite{Wang18,Lu20},  distributed control protocols have been designed to ensure the finite-time bipartite consensus of signed networks. 
When the topology of signed networks  is quasi-strongly connected signed digraphs, the notation of interval bipartite consensus has been proposed  \cite{Meng16,Xia16,Meng20}, which indicates that all rooted-agents  achieve the bipartite consensus and non-rooted agents  spread in the interval constructed by two convergency values of rooted-agents. 
For signed networks under any signed digraphs, the bipartite containment tracking has been given  \cite{Meng17}. 
It means that followers converge to the convex hull determined by each leader's trajectory as well as its symmetric trajectory which is the same in modulus but different in sign.

From \cite{Olfa04,Li09,Du19},  the terminal value of all agents is determined by initial states, left eigenvector and right eigenvector that are corresponding to the zero eigenvalue of Laplacian matrix. 
The mathematical expression for the left eigenvector has been provided for the unsigned digraph and the signed digraph in \cite{Li09} and \cite{Du19}, respectively. 
When considering the strongly connected and quasi-strongly connected unsigned digraphs, all entries of the right eigenvector are equal. 
For the strongly connected and structurally balanced signed digraphs, the moduli of all entries of the right eigenvector are the same. 
However, when considering quasi-strongly connected signed digraphs, there  exist various cases for the right eigenvector. 
Until now, the mathematical expression has not been proposed for the right eigenvector of Laplacian matrix associated with the zero eigenvalue.

Motivated by  discussions above, this paper concentrates on providing the mathematical expression for the right eigenvector of Laplacian matrices corresponding to the zero eigenvalue and further deriving the algebraic criteria for identifying the collective behaviors of signed networks. 
Firstly, based on the Cramer's Rule,   the mathematical expression of the right eigenvector is given. 
Secondly, for quasi-strongly connected signed digraphs,   the relationship between the right eigenvector and the structurally balanced (or unbalanced)  property of a node is revealed. 
Namely, a node is a structurally balanced (respectively, unbalanced) node if and only if the absolute value of its corresponding entry in the right eigenvector is  equal to (respectively, is less than) one. 
Thirdly,  algebraic criteria for identifying  collective behaviors  achieved by signed networks are proposed. 
Finally, one simulation example is provided to demonstrate the effectiveness of  theoretical results.

The rest of this paper is organized as follows. 
In Section \Rmnum{2},  notations and basic knowledge for signed digraphs are introduced. 
In Section \Rmnum{3}, the problem formulation is provided. 
In Section \Rmnum{4}, the mathematical expression for the right eigenvector of Laplacian matrix associated with the zero eigenvalue is proposed. 
In Section \Rmnum{5}, sufficient and necessary conditions based on different forms of right eigenvectors are given for different collective behaviors. 
Besides, the relationship between structurally balanced property of a node and the right eigenvector is revealed.
A numerical example and conclusions are given in Sections \Rmnum{6} and \Rmnum{7}, respectively.

\section{notations and preliminaries}\label{notations and preliminaries}

\subsection{Notations}
For a positive integer $n$, we denote $\mathcal{I}_n=\{1, 2, \cdots, n\}$, $\mathbf{1}_n=[1,1, \cdots, 1]^{\top}\in \mathbb{R}^{n}$, $\mathbf{0}_{n}=[0,0,\cdots,0]^{\top}\in \mathbb{R}^{n}$ and $\mathrm{diag}\{d_1, d_2, \cdots, d_n\}$ as diagonal matrix whose diagonal elements are $d_1, d_2, \cdots, d_n$. 
For a square matrix $M\in \mathbb{R}^{n\times n}$, let $\mathrm{det}(M)$, $\mathcal{N}(M)$ and $M^{*}$ represent the determinant, null space and adjoint matrix of $M$, respectively. 
For a real number $a \in \mathbb{R}$, let $|a|$ and $\mathrm{sgn}(a)$ represent the absolute value and the sign function of $a$, respectively. 
The set of all $n$-by-$n$ gauge transformations is given by
\begin{equation*}
\mathcal{D}_n=\{D_n=\mathrm{diag}\{\sigma_1, \sigma_2, \dots, \sigma_n\}: \sigma_i \in \{-1, 1\}, i \in \mathcal{I}_n\}.
\end{equation*}

\subsection{Signed Digraphs}

A weighted signed digraph is represented by a triple $\mathcal{G}=(\mathcal{V}, \mathcal{E}, A)$, where $\mathcal{V}=\{v_1, v_2, \dots, v_n\}$ represents its node set, $\mathcal{E}\subseteq \mathcal{V}\times \mathcal{V}=\{(v_j, v_i): v_i, v_j \in\mathcal{V}\}$ is its edge set, which is defined such that $(v_j, v_i)$ is a directed edge from $v_j$ to $v_i$ when the node $v_j$ is a neighbor of node $v_i$, and $\mathcal{A}=[a_{ij}]\in\mathbb{R}^{n\times n}$ is its adjacency weight matrix which is defined such that $(v_j, v_i)\in \mathcal{E}\Leftrightarrow a_{ij}\neq 0$ and otherwise $a_{ij}=0$. Assume that $\mathcal{G}$ has no self-loops, i.e., $a_{ii}=0, \forall i \in \mathcal{I}_n$. Let $\mathcal{N}_i = \{j: (v_j, v_i)\in\mathcal{E}\}$ denote the set of labels of those nodes that are neighbors of $v_i$. The Laplacian matrix $L$ of $\mathcal{G}$ is defined as  $L=[l_{ij}]\in \mathbb{R}^{n \times n}$ whose elements satisfy $l_{ij}=\sum_{k\in \mathcal{N}_i}|a_{ik}|$ if $j=i$, and $l_{ij}=-a_{ij}$, otherwise. The signed graph $\mathcal{G}$ is structurally balanced if it admits a bipartition of the nodes $\mathcal{V}_1$, $\mathcal{V}_2$, $\mathcal{V}_1\cup\mathcal{V}_2=\mathcal{V}$, $\mathcal{V}_1\cap\mathcal{V}_2=\emptyset$, such that $a_{ij}\geq 0 \ \forall v_i, v_j\in \mathcal{V}_q(q\in \{1, 2\}), a_{ij}\leq 0 \ \forall v_i\in\mathcal{V}_q, v_j\in\mathcal{V}_r, q \neq r(q, r\in \{1, 2\})$. Otherwise, the signed graph $\mathcal{G}$ is structurally unbalanced.

A directed path $\mathcal{P}$ (of length $l$) in $\mathcal{G}$ from the initial node $v_i$ to the terminal node $v_j$ is constructed by a finite sequence of edges: $\mathcal{P}=(v_{k_0}, v_{k_1})$, $(v_{k_1}, v_{k_2})$, $\dots$, $(v_{k_{l-1}}, v_{k_l})$, where $k_0 = i$, $k_l=j$ and $v_{k_0}, v_{k_1}, \dots, v_{k_l}$ are distinct nodes. 
The initial node $v_i$ is also said to be a parent node of the terminal node $v_j$. 
All parent nodes of $v_j$ are denoted as $\mathcal{V}_j^{o}$. 
A directed cycle $\mathcal{C}$ (of length $l$) in $\mathcal{G}$ is a closed directed path whose initial node and terminal node are same. Note that, since there are no self-loops in $\mathcal{G}$, we have $l \geq 1$ for a path and $l \geq 2$ for a directed cycle, respectively. 
In addition, we say that the directed cycle $\mathcal{P}$ is positive if $a_{k_1k_0}a_{k_2k_1}\cdots a_{k_0k_{l-1}}>0$ and is negative if $a_{k_1k_0}a_{k_2k_1}\cdots a_{k_0k_{l-1}}<0$. 
If there exists a directed path from the node $v_i$ to every other node in $\mathcal{G}$, then the signed digraph $\mathcal{G}$ is quasi-strongly connected (or containing a spanning tree). 
Besides, the node $v_i$ is called a rooted node.
The signed digraph $\mathcal{G}$ is strongly connected if its all nodes are rooted nodes. 
 A directed cycle is called a rooted cycle if its all nodes are rooted nodes. 
If $A=A^{\top}$ holds, then the signed digraph $\mathcal{G}$ is said to be a signed undirected graph, in which the strong connectivity collapses into connectivity.

When considering a signed digraph $\mathcal{G}_s=(\mathcal{V}_s,\mathcal{E}_s,A_s)$, we say that $\mathcal{G}_s$ is a subgraph of $\mathcal{G}=(\mathcal{V},\mathcal{E},A)$ if $\mathcal{V}_s\subseteq \mathcal{V}$ and $\mathcal{E}_s\subseteq \mathcal{E}$ hold. 
Suppose that a quasi-strongly connected signed digraph $\mathcal{G}$ has $m$ rooted nodes and $n-m$ non-rooted nodes. 
Without loss of generality,  denote $\mathcal{V}_\mathrm{r}=\{v_1,v_2,\cdots,v_m\}$ and $\mathcal{V}_{\mathrm{nr}}=\{v_{m+1},v_{m+2},\cdots,v_n\}$ as the rooted node set and the non-rooted node set of the signed digraph $\mathcal{G}$, respectively. 
We  induce two subgraphs $\mathcal{G}_\mathrm{r}=(\mathcal{V}_\mathrm{r},\mathcal{E}_\mathrm{r},A_\mathrm{r})$ and $\mathcal{G}_{\mathrm{nr}}=$ $(\mathcal{V}_{\mathrm{nr}}$, $\mathcal{E}_{\mathrm{nr}}$, $A_{\mathrm{nr}})$, where $\mathcal{E}_\mathrm{r}=\{(v_i, v_j)\in\mathcal{E}: v_i, v_j\in\mathcal{V}_\mathrm{r}\}$, $A_\mathrm{r}=[a_{ij}^\mathrm{r}]\in\mathbb{R}^{m\times m}$, $\mathcal{E}_\mathrm{nr}=\{(v_i, v_j)\in\mathcal{E}: v_i, v_j\in\mathcal{V}_\mathrm{nr}\}$, and $A_\mathrm{nr}=[a_{ij}^{\mathrm{nr}}]\in\mathbb{R}^{(n-m)\times (n-m)}$. 
 Hence, the adjacency weight matrix $A$ can be written as
\begin{align*}
A=\begin{bmatrix}
A_\mathrm{r} & 0\\
A_{\mathrm{rnr}} & A_{\mathrm{nr}}
\end{bmatrix}
\end{align*}

\noindent where $A_{\mathrm{rnr}}=[a_{ij}^{\mathrm{rnr}}]\in \mathbb{R}^{(n-m)\times m}$ with $a_{ij}^{\mathrm{rnr}}=a_{(i+m),j}, \forall 1\leq i \leq n-m, \forall 1\leq j\leq m$. The corresponding Laplacian matrix is given by
\begin{equation}\label{eq1}
L=\begin{bmatrix}
L_\mathrm{r} & 0\\
-A_{\mathrm{rnr}} & L_{\mathrm{nr}}+B
\end{bmatrix}
\end{equation}

\noindent where $L_\mathrm{r}$ and $L_{\mathrm{nr}}$ are the Laplacian matrices of the subgraphs $\mathcal{G}_\mathrm{r}$ and $\mathcal{G}_{\mathrm{nr}}$, respectively, and $B=\mathrm{diag}\{b_1,b_2,\cdots,b_{n-m}\}$ is a diagonal matrix whose element is $b_i=\sum_{j=1}^m|a_{(i+m)j}|, \forall 1\leq i\leq n-m$.

For the Laplacian matrix $L$, its eigenvalue distribution can be introduced in the following lemma.

\begin{lemma}\cite{Meng16}\label{lem1}
Consider a quasi-strongly connected signed digraph $\mathcal{G}$. Then, $L$ satisfies one of the following two results.
\begin{itemize}
  \item[R1)] $L$ has one zero eigenvalue and $n-1$ eigenvalues with positive real parts.
  \item[R2)] All eigenvalues of $L$ have positive real parts.
\end{itemize}

\noindent To be specific, the Laplacian matrix $L$ satisfies R1) if and only if one of the following two conditions holds:
\begin{enumerate}
  \item[C1)] $\mathcal{G}$ has no rooted cycles (that is, $\mathcal{G}$ has exactly one root);
  \item[C2)] all rooted cycles of $\mathcal{G}$ are positive.
\end{enumerate}

\noindent In addition, the Laplacian matrix $L$ satisfies R2) if and only if the condition C3) holds:
\begin{enumerate}
  \item[C3)] at least one rooted cycle of $\mathcal{G}$ is negative.
\end{enumerate}
\end{lemma}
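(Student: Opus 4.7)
The plan is to exploit the block lower-triangular structure displayed in (\ref{eq1}), which yields the factorization $\det(\lambda I_n-L)=\det(\lambda I_m-L_\mathrm{r})\det(\lambda I_{n-m}-(L_{\mathrm{nr}}+B))$. Consequently, the spectrum of $L$ (counted with multiplicity) is the union of the spectra of $L_\mathrm{r}$ and $L_{\mathrm{nr}}+B$, so I would analyze the two diagonal blocks separately and then reassemble.

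First I would show that $L_{\mathrm{nr}}+B$ is positive stable under each of C1)--C3). A direct calculation identifies the $i$-th Gershgorin disc of $L_{\mathrm{nr}}+B$ as centered at the positive real number $\sum_{k\in\mathcal{N}_{m+i}}|a_{(m+i),k}|$ with radius equal to that center minus $b_i$; hence every eigenvalue lies in the closed right half-plane and can meet the imaginary axis only at $0$, and only via rows with $b_i=0$. To exclude $0$ itself I would run the standard irreducible-diagonal-dominance argument: if $(L_{\mathrm{nr}}+B)x=\mathbf{0}_{n-m}$ with $|x_{i_0}|=\max_i|x_i|>0$, equality in the triangle inequality at row $i_0$ forces $b_{i_0}=0$ together with $|x_j|=|x_{i_0}|$ at every non-rooted in-neighbor of $v_{m+i_0}$. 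Iterating backward and invoking the quasi-strong connectivity of $\mathcal{G}$, the chain of equalities must eventually reach a non-rooted node that has a rooted in-neighbor, i.e., one with $b>0$, giving a contradiction.

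Next I would analyze $L_\mathrm{r}$. Since $\mathcal{V}_\mathrm{r}$ coincides with the unique initial strongly connected component of $\mathcal{G}$, the subgraph $\mathcal{G}_\mathrm{r}$ is itself strongly connected and its directed cycles are precisely the rooted cycles of $\mathcal{G}$. Under C1) one has $m=1$ and $L_\mathrm{r}=[0]$, which trivially contributes a simple zero eigenvalue. Under C2), the gauge-transformation characterization of structurally balanced strongly connected signed graphs supplies $D\in\mathcal{D}_m$ making $DA_\mathrm{r}D$ entrywise nonnegative; then $DL_\mathrm{r}D$ is an ordinary Laplacian of a strongly connected unsigned digraph, carrying a simple zero eigenvalue and $m-1$ eigenvalues in the open right half-plane. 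Under C3) no such gauge exists, $\mathcal{G}_\mathrm{r}$ is structurally unbalanced, and the Altafini-type characterization for strongly connected signed graphs asserts that $L_\mathrm{r}$ is positive stable.

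Reassembling the blocks yields R1) under C1) or C2) and R2) under C3), and the converse implications follow at once because C1)--C3) are mutually exclusive and exhaustive for quasi-strongly connected signed digraphs. The main obstacle I anticipate is the positive stability assertion in C3): Gershgorin alone yields only semistability of $L_\mathrm{r}$, and strict positive stability requires a Perron-Frobenius-type argument that compares $L_\mathrm{r}$ with the unsigned Laplacian of $|A_\mathrm{r}|$ through every gauge $D\in\mathcal{D}_m$ and leverages the strict spectral-radius inequality available only in the structurally unbalanced regime. Handling this carefully, together with verifying that the zero eigenvalue in C2) is algebraically simple via the strictly positive Perron eigenvector of the gauge-transformed nonnegative matrix, is the delicate portion of the proof.
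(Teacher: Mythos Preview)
The paper does not supply its own proof of this lemma; it is quoted verbatim from \cite{Meng16} and used as a black box. Hence there is no in-paper argument to compare your proposal against.

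That said, your proposal is a correct and essentially the standard route to this result. The block-triangular splitting $\sigma(L)=\sigma(L_\mathrm{r})\cup\sigma(L_{\mathrm{nr}}+B)$ is exactly how \cite{Meng16} organizes the proof, and your treatment of each block is sound: the Gershgorin-plus-propagation argument for the nonsingularity of $L_{\mathrm{nr}}+B$ is the content of \cite[Lemma~5]{Meng16} (invoked later in the present paper), while the trichotomy for $L_\mathrm{r}$ follows from the equivalence, for strongly connected signed digraphs, between ``all cycles positive'' and structural balance, together with Altafini's spectral dichotomy \cite{Alta13}. Your identification of $\mathcal{V}_\mathrm{r}$ with the unique initial strongly connected component, and of rooted cycles with cycles of $\mathcal{G}_\mathrm{r}$, is also correct. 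The one place to be slightly careful in your propagation step is to phrase it as ``the set of maximizing indices has no in-edges from outside itself, contradicting reachability from a root,'' rather than as a chain that must terminate; but this is a matter of presentation, not a gap.
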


For any signed digraph $\mathcal{G}$, the node $v_i$, $\forall i\in \mathcal{I}_n$ is called a structurally balanced node if the subgraph $\hat{\mathcal{G}}_i^{o}=\{\hat{\mathcal{V}}_i^{o},\hat{\mathcal{E}}_i^{o},\hat{A}_i^{o}\}$ is structurally balanced, where $\hat{\mathcal{V}}_i^{o}=\mathcal{V}_i^{o}\cup v_i$. Otherwise, the node $v_i$ is called a structurally unbalanced node. The following lemma  discloses the relationship between structurally balanced properties and structurally balanced (or unbalanced) nodes.

\begin{lemma}{\cite{Meng20}}
For any signed digraph $\mathcal{G}$, the following two results hold.
\begin{enumerate}
  \item [1)] The signed digraph $\mathcal{G}$ is structurally balanced if and only if all nodes are structurally balanced nodes.
  \item [2)]  The signed digraph $\mathcal{G}$ is structurally unbalanced if and only if at least one node is a structurally unbalanced node.
\end{enumerate}
\end{lemma}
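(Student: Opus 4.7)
My plan is to prove (1) directly and to deduce (2) by contraposition, since the two are logically equivalent. The forward direction of (1) is almost immediate, so the real work will be in the reverse direction.

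For the ($\Rightarrow$) direction I would exploit the fact that structural balance is hereditary under induced subgraphs. Given a bipartition $(\mathcal{V}_1,\mathcal{V}_2)$ of $\mathcal{V}$ witnessing structural balance of $\mathcal{G}$, I define $\mathcal{V}_q^{i}=\mathcal{V}_q\cap\hat{\mathcal{V}}_i^{o}$ for $q\in\{1,2\}$. Every edge of the induced subgraph $\hat{\mathcal{G}}_i^{o}$ has both endpoints in $\hat{\mathcal{V}}_i^{o}$, so the sign constraints inherited from $(\mathcal{V}_1,\mathcal{V}_2)$ continue to hold on $\hat{\mathcal{G}}_i^{o}$. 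Hence $\hat{\mathcal{G}}_i^{o}$ is structurally balanced, that is, $v_i$ is a structurally balanced node, for every $i\in\mathcal{I}_n$.

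For the ($\Leftarrow$) direction I would assemble the local bipartitions into a global one. For each $i$, the hypothesis gives a bipartition $(\mathcal{V}_1^{i},\mathcal{V}_2^{i})$ of $\hat{\mathcal{V}}_i^{o}$ witnessing structural balance of $\hat{\mathcal{G}}_i^{o}$, unique on each weakly connected component up to swapping the two parts. My plan is to pick an anchor node -- most conveniently a rooted node $v_r$, so that $v_r\in\hat{\mathcal{V}}_j^{o}$ for every other $j$ -- fix $(\mathcal{V}_1^{r},\mathcal{V}_2^{r})$, and then for each remaining $v_j$ align the swap of $(\mathcal{V}_1^{j},\mathcal{V}_2^{j})$ so that the sign assignment agrees with the anchor on the overlap $\hat{\mathcal{V}}_j^{o}\cap\hat{\mathcal{V}}_r^{o}$. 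Setting $\mathcal{V}_q=\bigcup_i\mathcal{V}_q^{i}$ would then give a candidate global bipartition.

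The main obstacle is verifying that this assignment is well-defined on every pairwise overlap $\hat{\mathcal{V}}_i^{o}\cap\hat{\mathcal{V}}_j^{o}$ and that every edge of $\mathcal{G}$ lands in a subgraph whose local bipartition dictates its sign. An equivalent and cleaner route I would keep in reserve is the classical cycle characterization: $\mathcal{G}$ is structurally balanced if and only if every undirected cycle has positive sign product. If $\mathcal{G}$ were structurally unbalanced, I would pick a negative undirected cycle $C$ and look for a terminal-like node $v_j$ of $C$ whose ancestor set $\hat{\mathcal{V}}_j^{o}$ contains every node and edge of $C$; then $\hat{\mathcal{G}}_j^{o}$ inherits the negative cycle, contradicting the hypothesis that $v_j$ is a structurally balanced node. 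Exhibiting such a $v_j$ from the parent-node structure is the delicate step, and it is where the hypothesis on $\mathcal{V}_j^{o}$ -- together with the connectivity available in $\mathcal{G}$ -- must really be used.
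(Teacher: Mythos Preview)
The paper does not prove this lemma at all; it is quoted from \cite{Meng20} and used as a black box. So there is no ``paper's proof'' against which to compare your argument line by line.

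That said, your plan deserves an honest assessment. Your $(\Rightarrow)$ direction is fine: restricting a balancing bipartition of $\mathcal{V}$ to $\hat{\mathcal{V}}_i^{o}$ certainly balances the induced subgraph $\hat{\mathcal{G}}_i^{o}$.

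The $(\Leftarrow)$ direction, however, cannot be completed at the generality stated. The two obstacles you yourself flag are genuine obstructions, not merely technicalities. Your anchor approach requires a rooted node $v_r$ with $v_r\in\hat{\mathcal{V}}_j^{o}$ for every $j$; such a node need not exist in an arbitrary signed digraph. Your cycle approach requires, for every negative semi-cycle $C$, a node $v_j$ whose ancestor set $\hat{\mathcal{V}}_j^{o}$ contains all of $C$; this also fails in general. Concretely, take $\mathcal{V}=\{v_1,v_2,v_3,v_4\}$ with directed edges $v_1\!\to\! v_2\,(+1)$, $v_3\!\to\! v_2\,(+1)$, $v_3\!\to\! v_4\,(+1)$, $v_1\!\to\! v_4\,(-1)$. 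The underlying $4$-cycle has negative sign product, so $\mathcal{G}$ is structurally unbalanced; yet $\hat{\mathcal{V}}_1^{o}=\{v_1\}$, $\hat{\mathcal{V}}_3^{o}=\{v_3\}$, $\hat{\mathcal{V}}_2^{o}=\{v_1,v_2,v_3\}$, $\hat{\mathcal{V}}_4^{o}=\{v_1,v_3,v_4\}$, and each induced subgraph is structurally balanced. Thus every node is a structurally balanced node while $\mathcal{G}$ is not---so the lemma, read literally for \emph{any} signed digraph, is false, and no patching of your argument will close the gap.

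The result as used elsewhere in the paper (Theorems \ref{thm3}--\ref{thm5}) is always under quasi-strong connectivity or decomposed into quasi-strongly connected components, which is exactly the extra hypothesis under which your anchor idea becomes viable. If you add that hypothesis, your gluing strategy can be made to work; without it, it cannot.
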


When considering arbitrary signed digraph, the corresponding unsigned digraph can be given in the following definition.

\begin{definition}{\cite{Du19}}\label{def1}
Given any signed digraph $\mathcal{G}=\left(\mathcal{V},\mathcal{E},A\right)$, a digraph $\overline{\mathcal{G}}=\left(\mathcal{V},\mathcal{E},\overline{A}\right)$ is called an induced unsigned digraph of $\mathcal{G}$ if its weight matrix $\overline{A}=\left[\overline{a}_{ij}\right]\in\mathbb{R}^{n\times n}$ is defined with entries satisfying $\overline{a}_{ij}=\left|a_{ij}\right|$, $\forall i,j\in\mathcal{I}_{n}$.
\end{definition}

Let $\overline{L}$ represent the Laplacian matrix of the induced unsigned digraph $\mathcal{\overline G}$. If the signed digraph $\mathcal{ G}$ is structurally balanced, then there exists a gauge transformation $D_n\in \mathcal{D}_n$ such that $\overline{L}=D_nLD_n$ holds.

For any subgraph $\mathcal{G}_s=(\mathcal{V}_s,\mathcal{E}_s,A_s)$ of $\mathcal{G}=(\mathcal{V},\mathcal{E},A)$, the neighbor set $\mathcal{N}_{\mathcal{G}_s}$ of the subgraph $\mathcal{G}_s$ is denoted by
\begin{equation*}
\mathcal{N}_{\mathcal{G}_s}=\{v_j:(v_j,v_i)\in \mathcal{E},~\forall v_i\in \mathcal{V}_s,\forall v_j\in \mathcal{V}\backslash\mathcal{V}_s\}.
\end{equation*}
where $\mathcal{V}\backslash\mathcal{V}_s=\{v_j:v_j\in \mathcal{V}~\mbox{but}~ v_j\notin \mathcal{V}_s\}$. When considering a signed digraph $\mathcal{G}$ under arbitrary topology, a node $v_i$ is called a leader if $v_i$ is included in some strongly connected subgraph $\mathcal{G}_s$ of $\mathcal{G}$ that satisfies $\mathcal{N}_{\mathcal{G}_s}=\varnothing$; otherwise, the node $v_i$ is called a follower. The sets of leaders and followers of $\mathcal{G}$ are denoted by $\mathcal{L}$ and $\mathcal{F}$, respectively.


\section{Problem Formulation}\label{Problem Formulation}

Consider signed networks with $n$ nodes given by $\mathcal{V}=\{v_i: i\in\mathcal{I}_n\}$. For every node $v_i$, its single-integrator dynamics  can be described by
\begin{equation}\label{eq4}
\dot{x}_i(t)=u_i(t),~~i\in\mathcal{I}_n
\end{equation}

\noindent where $x_i(t)\in\mathbb{R}$ and $u_i(t)\in\mathbb{R}$ are the state and the control input of the node $v_i$, respectively. From \cite{Alta13}, the control input $u_i(t)$ is provided by
\begin{equation}\label{eq5}
u_i(t)=\sum_{j\in\mathcal{N}_i}a_{ij}[x_j(t)-\mathrm{sgn}(a_{ij})x_i(t)],~~i\in\mathcal{I}_n.
\end{equation}
With $L$, we can write (\ref{eq4}) and (\ref{eq5}) in a compact form as follows
\begin{equation}\label{eq6}
\dot{\bm x}(t)=-L\bm x(t)
\end{equation}
where $\bm{x}(t)=[x_1(t), x_2(t), \dots, x_n(t)]^{\top}\in \mathbb{R}^{n}$. Denote $\theta_i\triangleq\lim_{t\rightarrow \infty}x_i(t)$, $\forall i\in \mathcal{I}_n$. For arbitrary initial state $x_i(0)$, $\forall i\in \mathcal{I}_n$, the system (\ref{eq6})  reaches
\begin{enumerate}
\item[1)] bipartite consensus if $|\theta_i|=|\theta_j|$,~~$\forall i,j\in \mathcal{I}_n$;
\item[2)] interval bipartite consensus if
\begin{equation*}
\theta_i\left\{
\begin{aligned}
&\in \{-\overline{\theta},~\overline{\theta}\},~~v_i~\mbox{is a rooted node}\\
&\in [-\overline{\theta},~\overline{\theta}],~~~v_i~\mbox{is a non-rooted node}\\
\end{aligned},~~\forall i\in \mathcal{I}_n;
\right.
\end{equation*}
\item[3)] bipartite containment tracking if
\begin{equation*}
\theta_i\in \cup_{v_j\in \mathcal{L}}[-|\theta_j|,|\theta_j|],~~\forall i\in \mathcal{I}_n;
\end{equation*}
\item[4)] state stability if $\theta_i=0,~~\forall i\in \mathcal{I}_n$.
\end{enumerate}

When the signed digraph $\mathcal{G}$ contains a spanning tree and satisfies the condition C1) or C2), then it follows from Lemma \ref{lem1} that the Laplacian matrix $L$ has one zero eigenvalue and $n-1$ nonzero eigenvalues with positive real parts. Let  $\eta$ and $\xi$  be the left and right eigenvector of $L$ associated with the zero eigenvalue, respectively. 
Taking advantage of \cite[Lemma 5.1]{Du19}, the terminal value of the system (\ref{eq6}) can be calculated by
\begin{equation}\label{eq7}
\lim_{t\rightarrow \infty}\bm x(t)=\frac{\xi\eta^{\top}}{\eta^{\top}\xi}\bm x(0)
\end{equation}

\noindent where $\bm{x}(0)=[x_1(0),x_2(0),\cdots,x_n(0)]^{\top}\in \mathbb{R}^{n\times 1}$ represents the initial states of all nodes. 
From (\ref{eq7}), we  know that the right eigenvector $\xi$ and the left eigenvector $\eta$ play an important role in investigating the collective behaviors of the system (\ref{eq6}) when the quasi-strongly connected signed digraph $\mathcal{G}$ satisfies the condition C1) or C2). 
Based on \cite{Du19}, the  mathematical expression of the left eigenvector $\eta$ is given by
\begin{equation*}
\eta=[\eta^{\top}_{\mathrm{r}},\eta^{\top}_{\mathrm{nr}}]^{\top}
\end{equation*}

\noindent with $\eta_{\mathrm{r}}=[\det(L_{11}),\det(L_{22}),\cdots,\det(L_{mm})]^{\top}$ and $\eta_{\mathrm{nr}}=\bm{0}_{n-m}$. 
It can be obtained from (\ref{eq7}) that the right eigenvector $\xi$ determines the dynamic behaviors of the system (\ref{eq6}). 
However,  mathematical expression for the right eigenvector $\xi$ has not been developed yet.

In the following, we are interested in deriving the mathematical expression of $\xi$, based on which the algebraic criteria for collective behaviors of signed networks under arbitrary communication topologies are developed.

\section{Mathematical Expression of Right Eigenvector}

In this section, for quasi-strongly connected signed digraphs, we aim at providing the mathematical expression for the right eigenvector $\xi$ of the Laplacian matrix $L$ associated with the zero eigenvalue. Without loss of generality, suppose that the quasi-strongly connected signed digraph $\mathcal{G}$ contains $m (1\leq m\leq n)$ rooted nodes and $n-m$ non-rooted nodes, and its Laplacian matrix $L$ is given by (\ref{eq1}).

When the signed digraph $\mathcal{G}$ meets the condition C1) or C2), we know that the subgraph $\mathcal{G}_\mathrm{r}$ is structurally balanced. Thus, there exists a gauge transformation $D_m\in \mathcal{D}_m$ such that $\overline{L}_\mathrm{r}=D_mL_\mathrm{r}D_m$, where $\overline{L}_\mathrm{r}$ is the Laplacian matrix of the induced unsigned digraph $\overline{\mathcal{G}}_\mathrm{r}$ corresponding to $\mathcal{G}_\mathrm{r}$. Motivated by $L$, we can give a constructive theorem for the right eigenvector $\xi$.

\begin{theorem}\label{thm1}
Consider a signed digraph $\mathcal{G}$ that is quasi-strongly connected and includes $m$ rooted nodes. 
Its Laplacian matrix $L$ of $\mathcal{G}$ is given by (\ref{eq1}). 
If $\mathcal{G}$ satisfies the condition C1) or C2), then the right eigenvector $\xi$ is constructed by
\begin{equation}\label{equ9}
\xi=\left[
  \begin{array}{c}
  \xi_{\mathrm{r}}\\
  \xi_{\mathrm{nr}}
  \end{array}
\right]
\end{equation}
with $\xi_{\mathrm{r}}=D_m1_m$ and $\xi_{\mathrm{nr}}=\left[\xi_{m+1}\right.$, $\xi_{m+2}$, $\cdots$, $\left.\xi_{n}\right]^{\top}$ whose entry satisfies
\begin{equation}\label{equ8}
\xi_{m+j}=-\frac{\mathrm{det}(\Phi_j)}{\mathrm{det}(L_{\mathrm{nr}}+B)},~\forall j\in\{1,2,\cdots,n-m\}
\end{equation}
where the matrix $\Phi_j \in \mathbb{R}^{(n-m)\times (n-m)}$ is derived by substituting the column $j$ of $L_{\mathrm{nr}}+B$ with the vector $-A_{\mathrm{rnr}}\xi_\mathrm{r}=[e_1$, $e_2$, $\cdots$, $e_{n-m}]^{\top}\in \mathbb{R}^{n-m}$.
\end{theorem}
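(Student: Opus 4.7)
The plan is to show directly that the constructed vector $\xi$ satisfies $L\xi = 0$ by exploiting the block lower-triangular form (\ref{eq1}) of $L$. Partitioning $L\xi$ conformably reduces the task to the two identities
\begin{equation*}
L_{\mathrm{r}} \xi_{\mathrm{r}} = 0, \qquad (L_{\mathrm{nr}}+B)\xi_{\mathrm{nr}} = A_{\mathrm{rnr}}\xi_{\mathrm{r}},
\end{equation*}
which I will verify in turn.

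First I would confirm that $\xi_{\mathrm{r}} = D_m \mathbf{1}_m$ lies in $\mathcal{N}(L_{\mathrm{r}})$. Because every rooted node reaches every other node, any two rooted nodes belong to a common strongly connected component, so the induced subgraph $\mathcal{G}_{\mathrm{r}}$ on the rooted set is itself strongly connected (trivially so when $m = 1$). Under C1), $m = 1$ and the identity $L_{\mathrm{r}} \xi_{\mathrm{r}} = 0$ is vacuous with the convention $D_1 = 1$. Under C2), every directed cycle of $\mathcal{G}_{\mathrm{r}}$ is a rooted cycle and is therefore positive, which by the classical characterization of structural balance in strongly connected signed digraphs makes $\mathcal{G}_{\mathrm{r}}$ structurally balanced. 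Hence a gauge $D_m \in \mathcal{D}_m$ exists with $\overline{L}_{\mathrm{r}} = D_m L_{\mathrm{r}} D_m$; since $\overline{L}_{\mathrm{r}} \mathbf{1}_m = \mathbf{0}_m$ (the rows of an unsigned Laplacian sum to zero) and $D_m^2 = I$, left-multiplication by $D_m$ yields $L_{\mathrm{r}} (D_m \mathbf{1}_m) = \mathbf{0}_m$.

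Next, for the second identity, the main technical step is to show that $L_{\mathrm{nr}}+B$ is nonsingular, so that the linear system $(L_{\mathrm{nr}}+B)\xi_{\mathrm{nr}} = A_{\mathrm{rnr}}\xi_{\mathrm{r}}$ admits a unique solution to which Cramer's rule can be applied. The block lower-triangular form of $L$ gives $\sigma(L) = \sigma(L_{\mathrm{r}}) \cup \sigma(L_{\mathrm{nr}}+B)$ counted with multiplicities. By Lemma \ref{lem1}, under C1) or C2) the matrix $L$ has exactly one zero eigenvalue and $n-1$ eigenvalues with positive real parts, and the previous paragraph already accounts for that zero eigenvalue via $L_{\mathrm{r}}$. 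Therefore all $n-m$ eigenvalues of $L_{\mathrm{nr}}+B$ have positive real parts, and $\det(L_{\mathrm{nr}}+B) \neq 0$. Cramer's rule then gives $\xi_{m+j} = \det(M_j)/\det(L_{\mathrm{nr}}+B)$, where $M_j$ is obtained from $L_{\mathrm{nr}}+B$ by replacing its $j$-th column with $A_{\mathrm{rnr}}\xi_{\mathrm{r}}$; flipping the sign of that column flips the sign of the determinant, producing exactly the formula (\ref{equ8}).

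The step I expect to be the main obstacle is the eigenvalue bookkeeping that guarantees $\det(L_{\mathrm{nr}}+B) \neq 0$, since it rests on combining Lemma \ref{lem1} with the structural-balance reduction of $\mathcal{G}_{\mathrm{r}}$ and on the observation that the zero eigenvalue of $L$ is already consumed by $L_{\mathrm{r}}$; once this point is in hand, the block decomposition and Cramer's rule finish the argument mechanically. A minor caveat is the degenerate case $m = 1$, where $L_{\mathrm{r}}$ is a $1\times 1$ zero matrix and the sign convention on $D_1 \in \{-1,+1\}$ must be fixed consistently with the ambient gauge.
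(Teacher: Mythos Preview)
Your proposal is correct and follows essentially the same strategy as the paper: establish $L_{\mathrm{r}}\xi_{\mathrm{r}}=0$ via structural balance of $\mathcal{G}_{\mathrm{r}}$, then solve $(L_{\mathrm{nr}}+B)\xi_{\mathrm{nr}}=A_{\mathrm{rnr}}\xi_{\mathrm{r}}$ and express the unique solution by Cramer's rule. The only differences are cosmetic: the paper obtains the invertibility of $L_{\mathrm{nr}}+B$ by citing \cite[Lemma~5]{Meng16} rather than by your eigenvalue-counting argument, and it unpacks Cramer's rule explicitly through the adjoint matrix $(L_{\mathrm{nr}}+B)^{*}$ instead of invoking the rule directly.
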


\begin{proof}
Since the signed digraph $\mathcal{G}$ meets the condition C1) or C2), we  know that the subgraph $\mathcal{G}_\mathrm{r}$ is strongly connected and structurally balanced. 
There exists a gauge transformation $D_m\in \mathcal{D}$ such that $\overline{L}_{\mathrm{r}}=D_mL_\mathrm{r}D_m$ and $L_\mathrm{r}D_m\bm{1}_m=\bm0_m$ hold. 
Moreover, the Laplacian matrix $L_\mathrm{r}$ has a zero eigenvalue and $m-1$ non-zero eigenvalues with positive reals. 
Thus,  $\bm{1}_{m}$ is the right eigenvector of $\overline{L}_\mathrm{r}$ associated with the zero eigenvalue. 
This, together with $\overline{L}_{\mathrm{r}}=D_mL_\mathrm{r}D_m$, ensures that $\xi_\mathrm{r}=D_m\bm{1}_m$ is the right eigenvector of $L_\mathrm{r}$ corresponding to the zero eigenvalue.

There  exists an inverse matrix $P\in \mathbb{R}^{n\times n}$ such that
\begin{align*}
PL&=\begin{bmatrix}
I_m & \mathbf{0}_{m\times(n-m)}\\
\mathbf{0}_{(n-m)\times m} &  (L_{\mathrm{nr}}+B)^{-1}
\end{bmatrix}
\begin{bmatrix}
L_\mathrm{r} & \mathbf{0}_{m\times (n-m)}\\
-A_{\mathrm{rnr}} & L_{\mathrm{nr}}+B
\end{bmatrix}\\
&=\begin{bmatrix}
L_\mathrm{r} & \mathbf{0}_{m\times (n-m)}\\
-(L_{\mathrm{nr}}+B)^{-1}A_{\mathrm{rnr}} & I_{n-m}
\end{bmatrix}=L'.
\end{align*}

\noindent Because of $L\xi=\bm 0_n$, we can further deduce
\begin{align}\label{equ10}
L'\xi=\bm 0_n.
\end{align}

\noindent It follows from \cite[Lemma 5]{Meng16} that all eigenvalues of $L_{\mathrm{nr}}+B$ has positive real parts. With (\ref{equ10}), it is immediate to obtain
\begin{equation}
\xi_{\mathrm{nr}}=(L_{\mathrm{nr}}+B)^{-1}A_{\mathrm{rnr}}D_m1_m.
\end{equation}

Next,  the expression of $\xi_{\mathrm{nr}}$ is explored. We can calculate
\begin{align*}
(L_{\mathrm{nr}}+B)^{-1}A_{\mathrm{rnr}}&=\frac{1}{\mathrm{det}(L_{\mathrm{nr}}+B)}(L_{\mathrm{nr}}+B)^{*}A_{\mathrm{rnr}}\\
&\triangleq\frac{1}{\mathrm{det}(L_{\mathrm{nr}}+B)}F
\end{align*}

\noindent where
\begin{align*}
(L_{\mathrm{nr}}+B)^*
=\begin{bmatrix}
l_{m+1, m+1}^* & -a_{m+2,m+1}^*  & \dots &-a_{n, m+1}^*\\
-a_{m+1,m+2}^* & l_{m+2,m+2}^* & \dots & -a_{n,m+2}^*\\
\vdots & \vdots   & \ddots & \vdots\\
-a_{m+1,n}^* & -a_{m+2,n}^* & \dots & a_{nn}^*
\end{bmatrix}
\end{align*}

\noindent with the element $-a_{m+i,m+j}^*=(-1)^{(i+j)}\det((L_\mathrm{nr}+B)_{ij})$, $\forall i,j\in \mathcal{I}_{n-m}$ and $l_{m+i, m+i}=(-1)^{(i+i)}\det((L_\mathrm{nr}+B)_{ii})$. $(L_\mathrm{nr}+B)_{ij}$ is developed from $L_\mathrm{nr}+B$ by deleting $i$th row and $j$th column. 

Denote $A_{\mathrm{rnr}}^{(i,:)}$, $\forall i\in \mathcal{I}_{n-m}$ (respectively, $F^{(j,:)}$, $\forall j\in \mathcal{I}_{m}$) as the $i$th (respectively, $j$th) row of the matrix $A_{\mathrm{rnr}}$ (respectively, $F$).  Then, we have
\begin{align}\label{equ12}
F^{(j, :)}=&\{(-a_{m+1,m+j}^*)A_{\mathrm{rnr}}^{(1,:)}+\nonumber(-a_{m+2,m+j}^*)A_{\mathrm{rnr}}^{(2,:)}\\
&\nonumber+\dots+(-a_{n, m+j}^*)A_{\mathrm{rnr}}^{(n-m,:)} \}\\
=\nonumber&\{ (-1)^{1+j}\mathrm{det}((L_{\mathrm{nr}}+B)_{1j})A_{\mathrm{rnr}}^{(1,:)}\\
&+\nonumber(-1)^{2+j}\mathrm{det}((L_{\mathrm{nr}}+B)_{2j})A_{\mathrm{rnr}}^{(2,:)} +\dots\nonumber\\
&+(-1)^{n-m+j}\mathrm{det}((L_{\mathrm{nr}}+B)_{(n-m)j}) A_{\mathrm{rnr}}^{(n-m,:)}\}   .
\end{align}

Let $-A_{\mathrm{rnr}}\xi_\mathrm{r}=[e_1, e_2, \dots, e_{n-m}]^{\top}\in \mathbb{R}^{n-m}$. 
A new matrix $\Phi_j$ can be constructed by replacing the $j$th ($\forall j\in \mathcal{I}_{n-m}$) column of $L_{\mathrm{nr}}+B$ with $-A_{\mathrm{rnr}}\xi_\mathrm{r}$. 
With (\ref{equ12}), the following equation can be derived
\begin{align}\label{expression of delta j}
\mathrm{det}(\Phi_j)
=&\{(-1)^{1+j}e_1 \mathrm{det}((L_{\mathrm{nr}}+B)_{1j})\nonumber\\
&+(-1)^{2+j}e_2\mathrm{det}((L_{\mathrm{nr}}+B)_{2j})+\dots\nonumber\\
&+(-1)^{n-m+j}e_{n-m}\mathrm{det}((L_{\mathrm{nr}}+B)_{(n-m)j})\}\nonumber\\
=&-\{(-1)^{1+j}\mathrm{det}((L_{\mathrm{nr}}+B)_{1j})A_{\mathrm{rnr}}^{(1, :)}\xi_\mathrm{r}\nonumber\\
&+(-1)^{2+j}\mathrm{det}((L_{\mathrm{nr}}+B)_{2j})A_{\mathrm{rnr}}^{(2, :)}\xi_\mathrm{r}+\dots \nonumber\\
&+(-1)^{n-m+j}\mathrm{det}((L_{\mathrm{nr}}+B)_{(n-m)j})A_{\mathrm{rnr}}^{(n-m, :)}\xi_\mathrm{r}\}\nonumber\\
=&-F^{(j,:)}\xi_\mathrm{r}.
\end{align}

By the fact
\begin{equation*}
\begin{split}
\xi_\mathrm{nr}&=-\frac{1}{\mathrm{det}(L_{\mathrm{nr}}+B)}\\
&~~~~~~\times\left[F^{(1,:)}\xi_\mathrm{r},F^{(2,:)}\xi_\mathrm{r},\cdots,F^{(n-m,:)}\xi_\mathrm{r}\right]^{\top},
\end{split}
\end{equation*}

\noindent we  obtain that $(\ref{equ8})$ holds. Therefore, the right eigenvector $\xi$ satisfies (\ref{equ9}). The proof is  completed.
\end{proof}

\begin{remark}
Theorem \ref{thm1} provides  an approach to calculating the right eigenvector $\xi$ by exploiting the Laplacian matrix $L$. 
In comparison with the reference \cite{Du19},   the mathematical expression for the right eigenvector $\xi$ is developed, which is convenient to explore the collective behaviors of signed networks (see Section V for more details).
\end{remark}

\section{Applications in Analyzing Collective Behaviors of Signed Networks}

In this section, we aim at exploring some potential applications for the right eigenvector $\xi$. Based on $\xi$,  the terminal states of the agents in signed networks can be calculated.

\begin{theorem}
Consider the system (\ref{eq6}) under a communication topology that is a quasi-strongly connected signed digraph $\mathcal{G}$ with $n$ rooted nodes. The Laplacian matrix $L$ of $\mathcal{G}$ is given by (\ref{eq1}). Then, there exist the following results.
\begin{enumerate}
  \item If $\mathcal{G}$ satisfies the condition C1) or C2), then the terminal state of the system (\ref{eq6}) is given by
  \begin{equation}\label{equa12}
  \begin{split}
  \lim_{t\rightarrow \infty}x(t)=\left[
  \begin{array}{c}
\Ds{  \frac{\sum_{i=1}^m\sigma_i\det(L_{ii})x_i(0)}{\sum_{i=1}^m\det(L_{ii})}\sigma_1}\\
 \Ds{ \frac{\sum_{i=1}^m\sigma_i\det(L_{ii})x_i(0)}{\sum_{i=1}^m\det(L_{ii})}\sigma_2}\\
  \vdots\\
 \Ds{ \frac{\sum_{i=1}^m\sigma_i\det(L_{ii})x_i(0)}{\sum_{i=1}^m\det(L_{ii})}\sigma_m}\\
\Ds{  -\frac{\sum_{i=1}^m\sigma_i\det(L_{ii})x_i(0)}{\sum_{i=1}^m\det(L_{ii})}\frac{\mathrm{det}(\Phi_1)}{\mathrm{det}(L_{\mathrm{nr}}+B)}}\\
\Ds{  -\frac{\sum_{i=1}^m\sigma_i\det(L_{ii})x_i(0)}{\sum_{i=1}^m\det(L_{ii})}\frac{\mathrm{det}(\Phi_2)}{\mathrm{det}(L_{\mathrm{nr}}+B)}}\\
  \vdots\\
\Ds{  -\frac{\sum_{i=1}^m\sigma_i\det(L_{ii})x_i(0)}{\sum_{i=1}^m\det(L_{ii})}\frac{\mathrm{det}(\Phi_{n-m})}{\mathrm{det}(L_{\mathrm{nr}}+B)}}\\
  \end{array}
  \right].
  \end{split}
  \end{equation}
  \item If $\mathcal{G}$ satisfies the condition C3), then the system (\ref{eq6})  achieves the state stability.
\end{enumerate}
\end{theorem}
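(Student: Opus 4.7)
The plan is to derive the terminal state via the spectral identity (\ref{eq7}) in part (1) and via direct stability analysis of the linear ODE in part (2), with Lemma \ref{lem1} used to decide which regime applies.

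For part (1), Lemma \ref{lem1} combined with C1) or C2) guarantees that $L$ has a simple zero eigenvalue together with $n-1$ eigenvalues in the open right half-plane, so the formula $\lim_{t\to\infty}\bm{x}(t)=(\xi\eta^\top/\eta^\top\xi)\bm{x}(0)$ of (\ref{eq7}) applies. The two ingredients needed are now in hand: the left eigenvector $\eta=[\eta_{\mathrm{r}}^\top,\mathbf{0}_{n-m}^\top]^\top$ with $\eta_{\mathrm{r}}$ expressed through the principal minors $\det(L_{ii})$ (recalled in Section \Rmnum{3} from \cite{Du19}), and the right eigenvector $\xi=[\xi_{\mathrm{r}}^\top,\xi_{\mathrm{nr}}^\top]^\top$ with $\xi_{\mathrm{r}}=D_m\mathbf{1}_m$ and $\xi_{m+j}=-\det(\Phi_j)/\det(L_{\mathrm{nr}}+B)$ supplied by Theorem \ref{thm1}. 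I would then carry out three substitutions in order: first evaluate the scalar $\eta^\top\bm{x}(0)$, which collapses to $\sum_{i=1}^m \sigma_i\det(L_{ii})x_i(0)$ once the gauge signs are absorbed and the vanishing non-rooted block of $\eta$ is used to kill the contributions of $x_{m+1}(0),\ldots,x_n(0)$; second evaluate the scalar normalization $\eta^\top\xi=\sum_{i=1}^m\det(L_{ii})$ via $\sigma_i^2=1$; third multiply the common scalar ratio $\eta^\top\bm{x}(0)/\eta^\top\xi$ componentwise by $\xi$, producing a factor $\sigma_i$ on each rooted entry and a factor $-\det(\Phi_j)/\det(L_{\mathrm{nr}}+B)$ on each non-rooted entry, which reproduces (\ref{equa12}) row by row.

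For part (2), under C3) Lemma \ref{lem1} asserts that every eigenvalue of $L$ has strictly positive real part, hence $-L$ is Hurwitz and the unique solution $\bm{x}(t)=e^{-Lt}\bm{x}(0)$ of (\ref{eq6}) satisfies $\bm{x}(t)\to\mathbf{0}_n$ as $t\to\infty$, which is exactly the state stability conclusion. The main obstacle I anticipate lies in part (1), specifically the sign bookkeeping that turns the unsigned-looking minors $\det(L_{ii})$ recorded in $\eta_{\mathrm{r}}$ into the signed products $\sigma_i\det(L_{ii})$ appearing inside the sum of (\ref{equa12}); this is settled by writing $L_{\mathrm{r}}=D_m\overline{L}_{\mathrm{r}}D_m$ and transferring the classical unsigned left-null-vector formula through this gauge transformation, since a principal minor of $D_m\overline{L}_{\mathrm{r}}D_m$ carries a $\sigma_i$ coming from the row/column deleted. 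Once this identification is pinned down, the denominator $\sum_{i=1}^m\det(L_{ii})$ is automatically nonzero because each rooted principal minor is strictly positive under C1)/C2), and the remainder is pure substitution of the expressions from Theorem \ref{thm1} into (\ref{eq7}).
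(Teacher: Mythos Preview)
Your proposal is correct and follows the paper's approach exactly: the paper's own proof of part (1) is a one-line citation of Theorem \ref{thm1} together with the limit formula (\ref{eq7}), and part (2) is an immediate consequence of Lemma \ref{lem1}, with the substitutions you spell out left implicit.

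One small slip in your sign bookkeeping is worth flagging. A principal minor of $D_m\overline{L}_{\mathrm{r}}D_m$ does \emph{not} carry a factor $\sigma_i$: deleting row and column $i$ removes $\sigma_i$ from each copy of $D_m$, and what remains contributes $\bigl(\prod_{j\neq i}\sigma_j\bigr)^2=1$, so in fact $\det(L_{ii})=\det(\overline{L}_{ii})$. The sign $\sigma_i$ enters instead because the left null vector itself transforms as $\eta_{\mathrm{r}}=D_m\overline{\eta}_{\mathrm{r}}$ under the gauge, giving $(\eta_{\mathrm{r}})_i=\sigma_i\det(\overline{L}_{ii})=\sigma_i\det(L_{ii})$. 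With this correction your computed scalars $\eta^\top\bm{x}(0)=\sum_{i=1}^m\sigma_i\det(L_{ii})x_i(0)$ and $\eta^\top\xi=\sum_{i=1}^m\det(L_{ii})$ are exactly right, and the rest is indeed pure substitution.
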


\begin{proof}
1): Because the signed digraph $\mathcal{G}$ meets the condition C1) or C2), it follows from Theorem \ref{thm1} and (\ref{eq6}) that (\ref{equa12}) holds.

2): Since the signed digraph $\mathcal{G}$ satisfies the condition C3), from Lemma \ref{lem1}, all eigenvalues of $L$ have positive real parts. Hence, the system (\ref{eq6})  achieves the state stability. The proof is completed.
\end{proof}

From \cite{Meng20}, we know that the structurally balanced nodes play an important role in exploring the interval bipartite consensus of signed networks, based on which the impact index can be developed. 
However,   \cite{Meng20} does not provide an approach to seek structurally balanced (or unbalanced) nodes from all nodes of signed networks. 
Benefitting from $\xi$, the algebraic criteria to identify whether a node is a structurally balanced node or not are provided in the following theorem.

\begin{theorem}\label{thm3}
For a quasi-strongly connected signed digraph $\mathcal{G}$ that satisfies the condition C1) or C2), there exist the following two results.
\begin{enumerate}
  \item The node $v_i$, $\forall i\in \mathcal{I}_n$ is  a structurally balanced node if and only if $|\xi_i|=1$ holds.
  \item The node $v_i$, $\forall i\in \mathcal{I}_n$ is a structurally unbalanced node if and only if $|\xi_i|<1$ holds.
\end{enumerate}
\end{theorem}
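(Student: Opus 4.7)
The plan is to prove 1) first and then deduce 2) as its contrapositive, after first establishing the universal bound $|\xi_i|\leq 1$ for every $i\in\mathcal{I}_n$. The main workhorse will be the rowwise identity extracted from $L\xi=\bm 0_n$,
\begin{equation*}
\xi_i=\sum_{k\in\mathcal{N}_i}w_{ik}\,\mathrm{sgn}(a_{ik})\,\xi_k,\qquad w_{ik}=\frac{|a_{ik}|}{\sum_{j\in\mathcal{N}_i}|a_{ij}|},
\end{equation*}
which presents $\xi_i$ as a convex combination of the signed values $\mathrm{sgn}(a_{ik})\xi_k$. For rooted $v_i$, the formula $\xi_\mathrm{r}=D_m\bm 1_m$ of Theorem \ref{thm1} directly gives $|\xi_i|=1$, and $v_i$ is automatically a structurally balanced node because its ancestor subgraph coincides with the structurally balanced block $\mathcal{G}_\mathrm{r}$; the substantive work is therefore concentrated on non-rooted $v_i$.

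First I would prove the bound. Setting $M=\max_i|\xi_i|\geq 1$ and $S=\{i:|\xi_i|=M\}$, the triangle inequality applied to the identity at every $i\in S$ forces every neighbor of every node in $S$ to lie in $S$. If $M>1$, then $S$ would contain no rooted index; but quasi-strong connectivity of $\mathcal{G}$ guarantees that following neighbors backwards from any non-rooted node eventually arrives at a rooted node, which must then belong to $S$---a contradiction. Hence $M=1$, and $|\xi_i|\leq 1$ for every $i$. Next, for the forward direction of 1), I would assume $|\xi_i|=1$ and exploit the equality case of the triangle inequality: it upgrades the conclusion to $|\xi_k|=1$ \emph{and} $\xi_k=\mathrm{sgn}(a_{ik})\xi_i$ for every $v_k\in\mathcal{N}_i$. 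Iterating the same argument along incoming edges, every $v_j\in\hat{\mathcal{V}}_i^o$ inherits $|\xi_j|=1$ together with the sign rule $\xi_k=\mathrm{sgn}(a_{jk})\xi_j$ across every edge $(v_k,v_j)\in\hat{\mathcal{E}}_i^o$. The partition $\hat{\mathcal{V}}_{i,1}=\{v_j\in\hat{\mathcal{V}}_i^o:\xi_j=\xi_i\}$, $\hat{\mathcal{V}}_{i,2}=\hat{\mathcal{V}}_i^o\setminus\hat{\mathcal{V}}_{i,1}$ will then certify structural balance of $\hat{\mathcal{G}}_i^o$, so $v_i$ is a structurally balanced node.

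For the converse, I would suppose $\hat{\mathcal{G}}_i^o$ is structurally balanced. Because $\hat{\mathcal{V}}_i^o$ is closed under taking neighbors (neighbors of an ancestor of $v_i$ are themselves ancestors of $v_i$), the submatrix of $L$ indexed by $\hat{\mathcal{V}}_i^o$ coincides with the Laplacian $\hat{L}$ of $\hat{\mathcal{G}}_i^o$ viewed as a standalone digraph, and $\xi|_{\hat{\mathcal{V}}_i^o}\in\mathcal{N}(\hat{L})$. Quasi-strong connectivity of $\mathcal{G}$ places $\mathcal{V}_\mathrm{r}\subseteq\hat{\mathcal{V}}_i^o$, and the paths from any rooted node to any element of $\hat{\mathcal{V}}_i^o$ remain inside $\hat{\mathcal{V}}_i^o$; hence $\hat{\mathcal{G}}_i^o$ is itself quasi-strongly connected with the same rooted set $\mathcal{V}_\mathrm{r}$ and the same rooted cycles as $\mathcal{G}$, so it inherits C1) or C2) and Lemma \ref{lem1} makes $\mathcal{N}(\hat{L})$ one-dimensional. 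Structural balance then supplies a gauge $\hat{D}$ with $\hat{L}(\hat{D}\bm 1)=\bm 0$, and by the uniqueness of the bipartition of the strongly connected block $\mathcal{G}_\mathrm{r}$ up to a global sign flip, $\hat{D}$ can be chosen to agree with $D_m$ on $\mathcal{V}_\mathrm{r}$. Matching normalisations on $\mathcal{V}_\mathrm{r}$ then forces $\xi|_{\hat{\mathcal{V}}_i^o}=\hat{D}\bm 1$, so $\xi_i=\pm 1$. Finally, 2) drops out of 1) and the bound $|\xi_i|\leq 1$, since ``structurally unbalanced'' is by definition the negation of ``structurally balanced''.

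The hard part will be the converse direction: one must verify that $\hat{\mathcal{G}}_i^o$ genuinely inherits C1) or C2) so that Lemma \ref{lem1} applies to $\hat{L}$, and that the gauge $\hat{D}$ delivered by the structural balance of $\hat{\mathcal{G}}_i^o$ can be globally aligned with the $D_m$ that normalises $\xi$. The predecessor-tracing in the bound step is routine once closure of $S$ is noticed, but it rests crucially on quasi-strong connectivity to supply a rooted ancestor of every non-rooted node.
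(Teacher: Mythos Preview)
Your argument is correct and considerably more self-contained than the paper's own treatment. The paper does not actually prove Theorem~\ref{thm3} from scratch: it observes that Theorem~\ref{thm1} supplies the explicit right eigenvector $\xi$ and then invokes \cite[Lemma~5]{Meng20} as a black box to conclude. In contrast, you extract the convex-combination identity from $L\xi=\bm 0_n$, use it first to establish the uniform bound $|\xi_i|\leq 1$ via a maximum-principle argument, then exploit the equality case of the triangle inequality to propagate the sign rule $\xi_k=\mathrm{sgn}(a_{jk})\xi_j$ through the ancestor set $\hat{\mathcal{V}}_i^o$ and exhibit the structural-balance bipartition directly. For the converse you identify $\hat{\mathcal{G}}_i^o$ as a quasi-strongly connected subgraph sharing $\mathcal{V}_\mathrm{r}$ and its rooted cycles with $\mathcal{G}$, apply Lemma~\ref{lem1} to its Laplacian, and match normalisations on $\mathcal{V}_\mathrm{r}$ to force $\xi_i=\pm 1$. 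Each of the supporting claims---closure of $\hat{\mathcal{V}}_i^o$ under in-neighbors, coincidence of the restricted block of $L$ with the intrinsic Laplacian $\hat{L}$, equality of the rooted-node sets and rooted cycles, and uniqueness of the gauge on the strongly connected block up to a global sign---checks out. What your approach buys is a proof that stands on its own within the paper's framework without importing \cite{Meng20}; what the paper's approach buys is brevity at the cost of an external dependency.
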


\begin{proof}
Since $\mathcal{G}$ satisfies the condition C1) or C2),   the right eigenvector $\xi$ of $L$ associated with the zero eigenvalue is developed from Theorem \ref{thm1}. This, together with \cite[Lemma 5]{Meng20}, ensures  the algebraic criteria for structurally balanced nodes and unbalanced nodes.
\end{proof}

With the help of $\xi$,  the algebraic criteria for collective behaviours of signed networks under quasi-strongly connected signed digraphs can be developed.

\begin{theorem}\label{thm4}
Consider a signed digraph $\mathcal{G}$ that is quasi-strongly connected. Then for arbitrary initial state $\bm{x}(0)$, there exist the following results for the system (\ref{eq6}).
\begin{enumerate}
  \item[1)] The system (\ref{eq6}) reaches the interval bipartite consensus if and only if $\det(L)=0$ holds. 
In particular, the system (\ref{eq6})  reaches the bipartite consensus if and only if $\det(L)=0$ and $|\xi_1|=|\xi_2|=\cdots=|\xi_n|=1$ hold.
  \item[2)] The system (\ref{eq6})  reaches the state stability if and only if $\det(L)\neq0$ holds.
\end{enumerate}
\end{theorem}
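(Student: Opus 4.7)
The plan is to pivot the entire argument on Lemma~\ref{lem1}, Theorem~2 (the terminal-state formula), and Theorem~\ref{thm3}, reducing Theorem~\ref{thm4} essentially to reading off the structure of the limit $\lim_{t\to\infty}\bm{x}(t)$ given by \eqref{equa12}.

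I would begin with part~2), since it is the cleanest. The quasi-strongly connected signed digraph $\mathcal{G}$ falls into exactly one of R1) or R2) of Lemma~\ref{lem1}, and the dichotomy is precisely whether $L$ has a zero eigenvalue or not, i.e. whether $\det(L)=0$ or $\det(L)\neq 0$. If $\det(L)\neq 0$, then all eigenvalues of $L$ have strictly positive real parts, so $-L$ is Hurwitz and the linear system \eqref{eq6} is globally asymptotically stable, forcing $\theta_i=0$ for every $i$ and every initial condition. Conversely, if the system reaches state stability for every initial condition, then $\bm{x}(t)\to 0$ for every $\bm{x}(0)$, which rules out a zero eigenvalue of $L$ (any nonzero eigenvector of $L$ with eigenvalue $0$ would produce a nondecaying trajectory), so $\det(L)\neq 0$. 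This handles part~2) completely and, by contraposition, already gives the equivalence $\det(L)=0 \Leftrightarrow$ (the system does not collapse to state stability).

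For part~1), the sufficient direction is an almost direct read-off from \eqref{equa12}. Assuming $\det(L)=0$, Lemma~\ref{lem1} forces condition C1) or C2), so Theorem~\ref{thm1} and Theorem~2 apply and the terminal value is $\theta_i = c\,\xi_i$ with the common scalar $c=\bigl(\sum_{i=1}^m \sigma_i\det(L_{ii}) x_i(0)\bigr)/\bigl(\sum_{i=1}^m \det(L_{ii})\bigr)$. For a rooted node $v_i$ ($i\le m$), $\xi_i=\sigma_i\in\{-1,+1\}$, so $\theta_i=\pm c$; for a non-rooted node, Theorem~\ref{thm3} (together with the observation that every node is either a structurally balanced or a structurally unbalanced node) yields $|\xi_i|\le 1$, so $|\theta_i|\le |c|$. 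Setting $\overline{\theta}=|c|$ matches the definition of interval bipartite consensus. The necessity of $\det(L)=0$ for interval bipartite consensus on arbitrary initial states follows from part~2): if $\det(L)\neq 0$, the terminal state is identically zero, contradicting the requirement that for a generic choice of $\bm{x}(0)$ the rooted-node limits be $\pm\overline{\theta}$ with $\overline{\theta}\neq 0$.

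For the bipartite-consensus refinement, sufficiency is immediate: if in addition $|\xi_1|=\dots=|\xi_n|=1$, then $|\theta_i|=|c\xi_i|=|c|$ for every $i$. For necessity, assume bipartite consensus is reached for every $\bm{x}(0)$; choose $\bm{x}(0)$ so that $c\neq 0$ (possible because $c$ is a nontrivial linear functional of $\bm{x}(0)$), and the equality $|\theta_i|=|\theta_j|$ combined with $\theta_k = c\,\xi_k$ forces $|\xi_i|=|\xi_j|$ for all $i,j$; comparison with any rooted index pins this common value to $1$. The main obstacle, in my view, is not any single inequality but carefully packaging the claim into a biconditional ``for arbitrary initial state'' — one must be a little careful that state stability (which satisfies interval bipartite consensus only in the degenerate $\overline{\theta}=0$ sense) is excluded by the ``for arbitrary $\bm{x}(0)$'' clause, and that the bound $|\xi_i|\le 1$ for non-rooted nodes is genuinely available from Theorem~\ref{thm3} rather than having to be re-derived here.
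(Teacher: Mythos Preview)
Your proposal is correct and follows essentially the same route as the paper: both proofs invoke Lemma~\ref{lem1} to split on $\det(L)=0$ versus $\det(L)\neq 0$, and then read off the conclusions from the terminal-state formula \eqref{eq7}/\eqref{equa12} together with Theorem~\ref{thm3}. Your argument is in fact more thorough than the paper's, which dispatches all necessity directions in a single sentence (``by the mutually exclusive relation between structural balance and unbalance''), whereas you spell out why $\det(L)\neq 0$ rules out any nontrivial limit and, for the bipartite-consensus refinement, why the equal-modulus condition forces $|\xi_i|=1$ for every $i$ by choosing $\bm{x}(0)$ with $c\neq 0$. The subtlety you flag---that state stability is formally the degenerate $\overline{\theta}=0$ instance of interval bipartite consensus---is real and is not addressed in the paper's own proof either; your handling of it (that the statement is to be read as requiring a nontrivial $\overline{\theta}$ for generic $\bm{x}(0)$) is the only sensible interpretation and matches the paper's evident intent.
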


\begin{proof}
1) ``\emph{Sufficiency}'': Since $\mathcal{G}$ is quasi-strongly connected and $\det(L)=0$ holds, it follows from Lemma \ref{lem1} that $\mathcal{G}$ satisfies the condition C1) or C2). 
Based on (\ref{eq7}), it directly follows from Theorems \ref{thm1} and \ref{thm3} that the system (\ref{eq6})  achieves the interval bipartite consensus. 
Due to $|\xi_1|=|\xi_2|=\cdots=|\xi_n|=1$, it can be obtained from (\ref{eq7}) that the system (\ref{eq6}) achieves the bipartite consensus.

2) ``\emph{Sufficiency}'': Because $\mathcal{G}$ is quasi-strongly connected and $\det(L)\neq0$ holds, it is immediate to develop that $\mathcal{G}$ satisfies the condition C3) and all eigenvalues of $L$ have positive real parts from Lemma \ref{lem1}. Therefore, the system (\ref{eq6})  achieves the state stability.

1) and 2) ``\emph{Necessity}'': The necessity of 1) and 2) can be induced by the mutually exclusive relation between the structural balance and unbalance of the signed digraph $\mathcal{G}$.
\end{proof}

\begin{remark}
Based on Theorem \ref{thm1},  algebraic criteria for  collective behaviors of signed networks are proposed. 
As we know, unsigned networks can be considered as  particular cases of signed networks when there exist no antagonistic interactions among agents. Therefore, Theorem \ref{thm1} can  be applied to identify the collective behaviors of unsigned networks.
\end{remark}

Based on Theorem \ref{thm3},  an algebraic criterion for the interval bipartite consensus of signed networks in the following corollary is shown.

\begin{corollary}\label{cor2}
For a quasi-strongly connected signed digraph $\mathcal{G}$, let $\det(L)=0$ hold. 
The system (\ref{eq6})  achieves the interval bipartite consensus instead of bipartite consensus if and only if the signed digraph $\mathcal{G}$ contains structurally unbalanced nodes.
\end{corollary}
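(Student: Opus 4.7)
The plan is to obtain Corollary~\ref{cor2} as a direct chain of equivalences built from Theorem~\ref{thm4} and Theorem~\ref{thm3}, with no additional spectral or topological analysis required. Since $\mathcal{G}$ is quasi-strongly connected and $\det(L)=0$ is assumed, part~1) of Theorem~\ref{thm4} already supplies the first equivalence: the system (\ref{eq6}) necessarily reaches interval bipartite consensus, and it further reaches bipartite consensus if and only if $|\xi_1|=|\xi_2|=\cdots=|\xi_n|=1$. Consequently, the phrase ``interval bipartite consensus instead of bipartite consensus'' is logically equivalent to the existence of some index $i\in\mathcal{I}_n$ with $|\xi_i|\neq 1$.

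The second step is to translate this condition on $\xi$ into structural language via Theorem~\ref{thm3}. Under the hypotheses of the corollary, Lemma~\ref{lem1} together with quasi-strong connectivity and $\det(L)=0$ forces condition C1) or C2) to hold, so Theorem~\ref{thm3} is applicable and partitions the indices into exactly two sharp cases: $|\xi_i|=1$ when $v_i$ is structurally balanced, and $|\xi_i|<1$ when $v_i$ is structurally unbalanced. In particular, every entry of $\xi$ satisfies $|\xi_i|\le 1$, so the condition ``some $|\xi_i|\neq 1$'' collapses to ``some $|\xi_i|<1$'', which is in turn the same as ``$\mathcal{G}$ contains at least one structurally unbalanced node.'' Concatenating the two chains of equivalences proves both directions of the corollary simultaneously.

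The only point to verify carefully is that the dichotomy in Theorem~\ref{thm3} applies under precisely the hypotheses stated here; without the guarantee that each $|\xi_i|$ is pinned to $\{|\xi_i|=1\}\cup\{|\xi_i|<1\}$, the reduction from ``$\neq 1$'' to ``$<1$'' would not be legitimate. Because $\det(L)=0$ combined with quasi-strong connectivity rules out case R2) of Lemma~\ref{lem1} and therefore rules out condition C3), Theorem~\ref{thm1} produces the right eigenvector $\xi$ explicitly and Theorem~\ref{thm3} delivers exactly the needed dichotomy; after this observation the argument is a one-line concatenation and presents no further obstacle.
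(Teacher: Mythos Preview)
Your argument is correct and mirrors the paper's own proof, which simply records Corollary~\ref{cor2} as a direct consequence of (\ref{eq7}) and Theorem~\ref{thm3}; your route through Theorem~\ref{thm4} is just a repackaging of the same implication, since Theorem~\ref{thm4} itself is derived from (\ref{eq7}) together with Theorems~\ref{thm1} and~\ref{thm3}. The care you take to justify that Theorem~\ref{thm3} applies (via Lemma~\ref{lem1} ruling out C3)) and that the dichotomy collapses ``$\neq 1$'' to ``$<1$'' is exactly the content the paper leaves implicit.
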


\begin{proof}
A direct consequence of (\ref{eq7}) and Theorem \ref{thm3}.
\end{proof}

When the signed digraph $\mathcal{G}$ is not connected, it can be divided into several connected subgraphs, each of which has a spanning tree and has no connections with  other subgraphs. 
Without loss of generality, we suppose that there exist $k$ $(k\geq 2)$ connected subgraphs $\mathcal{G}_{1}^s$, $\mathcal{G}_{2}^s$, $\cdots$, $\mathcal{G}_{k}^s$ in the signed digraph $\mathcal{G}$. Let $L_{s1}\in \mathbb{R}^{s_1\times s_1}$, $L_{s2}\in \mathbb{R}^{s_2\times s_2}$, $\cdots$, and $L_{sk}\in\mathbb{R}^{s_k\times s_k}$ denote the Laplacian matrices of the subgraphs $\mathcal{G}_{1}^s$, $\mathcal{G}_{1}^s$, $\cdots$, and $\mathcal{G}_{k}^s$, respectively, where $s_1+s_2+\cdots+s_k=n$. Besides, the Laplacian matrix $L$ of $\mathcal{G}$ can be written as
\begin{equation}\label{equ13}
L=\left[
  \begin{array}{cccc}
    L_{s1} &  &  &\\
     & L_{s2} &  &\\
     & & \ddots &\\
     & & & L_{sk}
  \end{array}
\right]_{n\times n}.
\end{equation}

The following theorem gives an algebraic criterion for collective behaviors of signed networks whose communication topologies are arbitrary signed digraphs.

\begin{theorem}\label{thm5}
Consider a signed digraph $\mathcal{G}$ under arbitrary topology. Then, the system (\ref{eq6})  achieves
\begin{enumerate}
  \item the bipartite containment tracking if and only if at least one node of $\mathcal{G}$ is a structurally balanced node;
  \item the state stability if and only if all nodes of $\mathcal{G}$ are structurally unbalanced nodes.
\end{enumerate}
\end{theorem}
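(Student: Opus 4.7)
The plan is to decompose $\mathcal{G}$ according to its leader/follower structure, reduce each leader block to the quasi-strongly connected setting handled by Theorem~\ref{thm4}, and then translate the leader-SCC condition into the node-level criterion via Theorem~\ref{thm3} and Lemma~2. First I would relabel so that leaders precede followers, which puts $L$ in the block form
\[
L=\begin{bmatrix}L_{\mathcal{L}} & 0\\ -A_{\mathcal{FL}} & L_{\mathcal{F}}+B_{\mathcal{F}}\end{bmatrix},
\]
where $L_{\mathcal{L}}$ is itself block-diagonal across the leader strongly connected components $\mathcal{S}_{1},\dots,\mathcal{S}_{p}$ (the source SCCs of the condensation DAG of $\mathcal{G}$) and $L_{\mathcal{F}}+B_{\mathcal{F}}$ has all eigenvalues with positive real parts since every follower is reachable from $\mathcal{L}$. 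Because the leader block decouples from the followers, the restricted dynamics on each $\mathcal{S}_{k}$ fall under Theorem~\ref{thm4}: $\mathcal{S}_{k}$ reaches bipartite consensus with limit $\pm\theta^{(k)}$ (generically $\theta^{(k)}\neq 0$) exactly when $\mathcal{S}_{k}$ is structurally balanced, and state stability ($\theta^{(k)}=0$) otherwise.

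The second step bridges the leader-SCC balance criterion and the node-level statement by establishing the equivalence: at least one $\mathcal{S}_{k}$ is structurally balanced if and only if at least one node of $\mathcal{G}$ is a structurally balanced node. For the backward implication, every node $v_{i}$ has some source SCC among its ancestors, so whenever $\hat{\mathcal{G}}_{i}^{o}$ is structurally balanced, any ancestor $\mathcal{S}_{k}\subseteq\hat{\mathcal{G}}_{i}^{o}$ is also structurally balanced. For the forward implication, if $v_{i}\in\mathcal{S}_{k}$ then leaders have no parents outside their own SCC, hence $\hat{\mathcal{G}}_{i}^{o}=\mathcal{S}_{k}$, so $v_{i}$ is a structurally balanced node as soon as $\mathcal{S}_{k}$ is. Lemma~2 applied inside each $\mathcal{S}_{k}$ makes these implications rigorous.

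The third step is the follower analysis. Invertibility of $L_{\mathcal{F}}+B_{\mathcal{F}}$ combined with passage to the limit in (\ref{eq6}) yields $\theta_{\mathcal{F}}=(L_{\mathcal{F}}+B_{\mathcal{F}})^{-1}A_{\mathcal{FL}}\theta_{\mathcal{L}}$. Comparing against the induced unsigned Laplacian of Definition~\ref{def1}, I would show that the entry-wise modulus of $(L_{\mathcal{F}}+B_{\mathcal{F}})^{-1}A_{\mathcal{FL}}$ is dominated by the row-stochastic matrix $(\overline{L}_{\mathcal{F}}+\overline{B}_{\mathcal{F}})^{-1}\overline{A}_{\mathcal{FL}}$; this gives $|\theta_{i}|\leq\max_{v_{j}\in\mathcal{L}}|\theta_{j}|$ and, after partitioning followers by which $\mathcal{S}_{k}$ reaches them, the sharper containment $\theta_{i}\in\bigcup_{v_{j}\in\mathcal{L}}[-|\theta_{j}|,|\theta_{j}|]$. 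Assembling the three pieces: (1) some structurally balanced node $\Leftrightarrow$ some $\theta^{(k)}\neq 0$ $\Leftrightarrow$ bipartite containment tracking, and (2) all nodes structurally unbalanced $\Leftrightarrow$ $\theta_{\mathcal{L}}=\mathbf{0}_{|\mathcal{L}|}$ $\Leftrightarrow$ $\theta_{\mathcal{F}}=\mathbf{0}_{|\mathcal{F}|}$ $\Leftrightarrow$ state stability.

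The main obstacle will be the follower containment bound. $L_{\mathcal{F}}+B_{\mathcal{F}}$ is not an M-matrix in the signed setting, since the off-diagonal entries $-a_{ij}$ may be positive, so the unsigned convex-hull-of-leaders argument cannot be copied verbatim. The cleanest route I can see is a Perron--Frobenius-style comparison against the induced unsigned Laplacian, together with careful bookkeeping of which leader SCCs feed each follower so that the limiting follower value is pinned to the \emph{union} of individual intervals $[-|\theta_{j}|,|\theta_{j}|]$ rather than merely to their convex hull; generic nondegeneracy of $\theta^{(k)}$ (under a structurally balanced $\mathcal{S}_{k}$) is the final ingredient needed to convert ``$\exists$ structurally balanced node'' into a genuine, non-vacuous containment tracking.
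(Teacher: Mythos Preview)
Your plan is sound and follows a different, more general decomposition than the paper's. The paper does not use the leader/follower block form; instead it asserts, just before the theorem, that an arbitrary $\mathcal{G}$ splits into mutually disconnected pieces $\mathcal{G}^{s}_{1},\dots,\mathcal{G}^{s}_{k}$, each of which already has a spanning tree, so that $L$ is block-diagonal as in (\ref{equ13}); the proof then simply applies Theorem~\ref{thm4} (via (\ref{eq7})) inside each block and reads off the two cases according to whether that block contains structurally balanced nodes, with necessity handled by the mutual exclusivity of the two node classes. Your source-SCC/follower split covers strictly more situations: whenever a single weakly connected component carries several leader SCCs---precisely the configuration of the paper's own Example~1, where $\{v_{1},v_{5},v_{6}\}$, $\{v_{4}\}$ and $\{v_{10}\}$ are distinct source components inside one connected graph---the block-diagonal reduction (\ref{equ13}) is unavailable, whereas your invertibility argument for $L_{\mathcal{F}}+B_{\mathcal{F}}$ still goes through. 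One simplification you can make: since every interval $[-|\theta_{j}|,|\theta_{j}|]$ contains $0$, the union over $v_{j}\in\mathcal{L}$ automatically equals $[-\max_{j}|\theta_{j}|,\max_{j}|\theta_{j}|]$, so the ``union versus convex hull'' bookkeeping you flag as the main obstacle is unnecessary; the entrywise comparison $\bigl|(L_{\mathcal{F}}+B_{\mathcal{F}})^{-1}A_{\mathcal{FL}}\bigr|\le(\overline{L}_{\mathcal{F}}+\overline{B}_{\mathcal{F}})^{-1}\overline{A}_{\mathcal{FL}}$ together with row-stochasticity of the right-hand side already delivers the containment required by the definition.
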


\begin{proof}
``\emph{Sufficiency}'' 1): From (\ref{equ13}), the system (\ref{eq6}) can be devided into $k$ subsystems as follows
\begin{equation}\label{equ14}
\dot{x}^{sj}=-L_{sj}x^{sj},~\forall j\in \mathcal{I}_k
\end{equation}

\noindent where $x^{sj}\in \mathbb{R}^{s_j}$ denotes the state vector. All subgraphs $\mathcal{G}_{1}^s$, $\mathcal{G}_{2}^s$, $\cdots$, $\mathcal{G}_{k}^s$ of $\mathcal{G}$ have a spanning tree. Let $\mathcal{B}_i$ (respectively, $\mathcal{U}_i$) represent the set of structurally balanced (respectively, unbalanced) nodes of $\mathcal{G}_i$. For the subgraph $\mathcal{G}_j$, $\forall j\in \mathcal{I}_k$, there exist the following two cases.

\emph{Case 1)}. When the subgraph $\mathcal{G}_j$ consists of structurally balanced nodes, it can be obtained that all rooted nodes of $\mathcal{G}_i$ are structurally balanced nodes, which leads to that $\mathcal{G}_j$ satisfies the condition C1) or C2). 
Based on (\ref{eq7}) and Theorem \ref{thm4}, we  obtain that the convergence behaviors of the subsystem (\ref{equ14}) are as follows
\begin{equation}\label{equ15}
\theta_i\in \cup_{v_p\in \mathcal{B}_j\cap \mathcal{L}}[-|\theta_p|,|\theta_p|],~i\in\{1,2,\cdots,s_j\}.
\end{equation}

\emph{Case 2)}. When the subgraph $\mathcal{G}_j$ has no structurally balanced nodes, it can be developed that all nodes of $\mathcal{G}_i$ are structurally unbalanced nodes, which implies that $\mathcal{G}$ satisfies the condition C3). Since all eigenvalues of $L_{sj}$ have positive real parts, we  derive that
\begin{equation}\label{equ16}
\theta_i\in \cup_{v_p\in \mathcal{U}_j\cap\mathcal{L}}[-|\theta_p|,|\theta_p|]={0},~i\in\{1,2,\cdots,s_j\}.
\end{equation}

Based on (\ref{equ15}) and (\ref{equ16}), we  further deduce
\begin{equation*}
\theta_i\in \cup_{v_p\in \mathcal{L}}[-|\theta_p|,|\theta_p|],~~\forall i\in \mathcal{I}_n
\end{equation*}

\noindent which indicates that the system (\ref{eq6})  achieves the bipartite containment tracking.

2): Since all nodes of $\mathcal{G}$ are structurally unbalanced nodes,  all eigenvalues of $L$ have positive real parts. Hence, the system (\ref{eq6})  reaches the state stability.

``\emph{Necessity}'' 1) and 2): The necessity of 1) and 2) can be deduced by the mutually exclusive relation between  structurally balanced nodes and unbalanced nodes of the signed digraph $\mathcal{G}$. The proof is completed.
\end{proof}

\section{Simulation Results}
In this section, we introduce one example to demonstrate the effectiveness of our developed theoretical results of signed networks described by the system (\ref{eq6}). Without loss of generality, the initial state $\bm{x}(0)$ of the system (\ref{eq6}) is selected as
\begin{equation}
\bm{x}(0)=[1, 2, -1, 2, 1, -2, 1, -1, 1, -1, -3, -2, 2]^{\top}.
\end{equation}

\begin{figure}[!htbp]
\centering
\includegraphics[width=8cm]{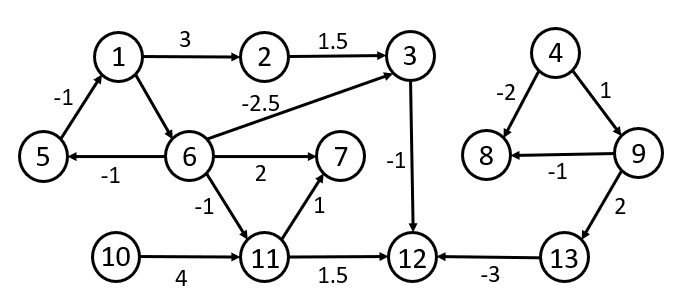}
\caption{Signed digraph $\mathcal{G}_a$.}
\label{tp1}
\end{figure}

\begin{example}
We use the signed digraph $\mathcal{G}_a$ in Fig. \ref{tp1} to denote the communication topology of the system (\ref{eq6}). 
From Fig. \ref{tp1},  the signed digraph $\mathcal{G}_a$ contains a spanning tree and its Laplacian matrix $L_a$ is provided by
\begin{align*}
L_a=
\begin{bmatrix}
\begin{smallmatrix}
     1  &0     & 0  &0  &1  &0  &0  &0  &0  &0  &0  &0  &0 \\
    -3  &3     &0   &0  &0  &0  &0  &0  &0  &0  &0 & 0  &0 \\
     0  &-1.5  &4  &0  &0 &2.5 &0  &0  &0  &0  &0  &0  &0 \\
     0  &0     &0   &0  &0  &0  &0  &0  &0  &0  &0  &0  &0 \\
     0  &0     &0   &0  &1  &1  &0  &0  &0  &0  &0  &0  &0 \\
    -2  &0     &0  &0  &0  &2  &0  &0  &0  &0  &0  &0  &0 \\
     0  &0     &-1  &0  &0 &-2  &4  &0  &0  &0 &-1  &0  &0 \\
     0  &0     &0  &2  &0  &0  &0  &3  &1  &0  &0  &0  &0 \\
     0  &0     &0 &-1  &0  &0  &0  &0  &1  &0  &0  &0  &0 \\
     0  &0     &0  &0  &0  &0  &0  &0  &0  &0  &0  &0  &0 \\
     0  &0     &0  &0  &0  &1  &0  &0  &0 &-4  &5  &0  &0 \\
     0  &0     &1  &0  &0  &0  &0  &0  &0  &0 &-1.5 &5.5 &3 \\
     0  &0     &0  &0  &0  &0  &0  &0 &-2  &0  &0  &0  &2
\end{smallmatrix}
\end{bmatrix}.
\end{align*}

The determinant of $L_a$ is $\det(L_a)=0$. According to $L_a$ and Theorem \ref{thm1}, we get the right eigenvector of $L_a$ associated with the zero eigenvalue as follows
\begin{equation*}
\xi^{a}=[-1,-1, \frac{1}{4}, 1, 1,-1,-\frac{3}{16}, -1, 1, 1, 1, -\frac{7}{22}, 1]^{\top}.
\end{equation*}
It follows from Theorem \ref{thm3} that the nodes $v_1$, $v_2$,  $v_4$, $v_5$, $v_6$, $v_8$, $v_9$, $v_{10}$, $v_{11}$, $v_{13}$, are structurally balanced nodes. 
The nodes $v_3$, $v_7$, $v_{12}$ are structurally unbalanced nodes.

\begin{figure}[!htbp]
\centering
\includegraphics[scale=.23]{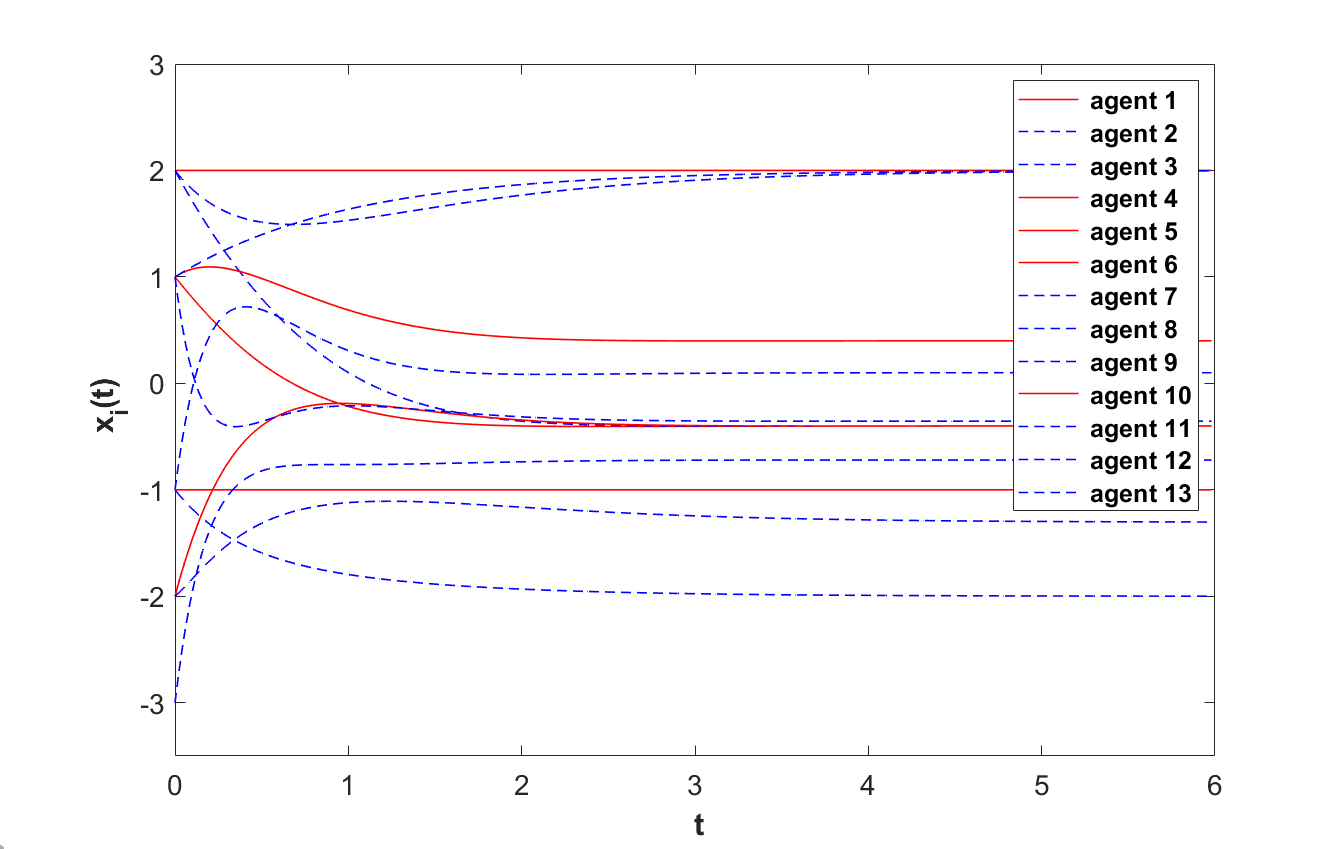}
\caption{State evolution of the system (\ref{eq6}) under the signed digraph $\mathcal{G}_a$.}
\label{sim1}
\end{figure}

The state evolution of the system (\ref{eq6}) is plotted in Fig. \ref{sim1}. This figure  depicts that the system (\ref{eq6}) achieves bipartite containment tracking, which is consistent with the developed results of Theorem \ref{thm5}.
\end{example}

\section{Conclusions}

In this paper, we have explored  algebraic criteria to identify the collective behaviors of signed networks. 
Toward this end,  an expression for the right eigenvector of Laplacian matrix associated with the zero eigenvalue has been given. 
Benefitting from the right eigenvector, we have developed  methods to distinguish structurally balanced nodes and unbalanced nodes from all nodes of signed digraphs, and to determine the bipartite consensus, interval bipartite consensus and state stability of signed networks. 
Moreover,  algebraic criteria for the bipartite containment tracking of signed networks have been proposed. 
In addition, we have introduced one simulation example to illustrate the correctness of our developed results.


\begin{thebibliography}{99}
\bibitem{Lin04} Z. Lin, M. Broucke, and B. Francis, ``Local control strategies for groups of mobile autonomous agents,'' \emph{IEEE Transactions on Automatic Control}, vol. 49, no. 4, pp. 622-629, Apr. 2004.
\bibitem{Cao08} M. Cao, A. S. Morse, and B. D. O. Anderson, ``Reaching a consensus in a dynamically changing environment: A graphical approach,'' \emph{SIAM Journal on Control and Optimization}, vol. 47, no. 2, pp. 575-600, 2008.
\bibitem{Yang12} Y. Cao, W. Yu, W. Ren, and G. Chen, ``An overview of recent progress in the study of distributed multi-agent coordination,'' \emph{IEEE Transactions on Industrial informatics}, vol. 9, no. 1, pp.427-438, Sept. 2012.
\bibitem{Yang14} T. Yang, Z. Meng, D. V. Dimarogonas, and K. H. Johansson, ``Global consensus for discrete-time multi-agent systems with input saturation constraints,'' \emph{Automatica}, vol. 50, no. 2, pp. 499-506, 2014.
\bibitem{Char15} T. Charalambous, Y. Yuan, T. Yang, W. Pan, C. N. Hadjicostis, and M. Johansson, ``Distributed finite-time average consensus in digraphs in the presence of time delays,'' \emph{IEEE Transactions on Control of Network Systems}, vol. 2, no. 4, pp. 370-381, Apr. 2015.
\bibitem{Olfa04} R. Olfati-Saber and R. M. Murray, ``Consensus problems in networks of agents with switching topology and time-delays,'' \emph{IEEE Transactions on Automatic Control}, vol. 49, no. 9, pp. 1520-1533, Sept. 2004.
\bibitem{Ren05} W. Ren and R. Beard, ``Consensus seeking in multiagent systems under dynamically changing interaction topologies,'' \emph{IEEE Transactions on Automatic Control}, vol. 50, no. 5, pp. 655-661, May 2005.
\bibitem{Li09} Z. Li and Y. Jia, ``Algebraic criteria for consensus problems of continuous-time networked systems,'' \emph{International Journal of Control}, vol. 82, no. 4, pp. 643-658, 2009.
\bibitem{Hu07} J. Hu and Y. Hong, ``Leader-following coordination of multi-agent systems with coupling time delays,'' \emph{Physica A: Statistical Mechanics and its Applications}, vol. 374, no. 2, pp. 853-863, 2007.
\bibitem{Ni10} W. Ni and D. Cheng, ``Leader-following consensus of multi-agent systems under fixed and switching topologies,'' \emph{Systems and Control Letters}, vol. 59, nos. 3-4, pp. 209-217, 2010.
\bibitem{Su13} H. Su, M. Z. Chen, J. Lam, and Z. Lin, ``Semi-global leader-following consensus of linear multi-agent systems with input saturation via low gain feedback,'' \emph{IEEE Transactions on Circuits and Systems I: Regular Papers}, vol. 60, no. 7, pp. 1881-1889, Mar. 2013.
\bibitem{Ji08} M. Ji, G. Ferrari-Trecate, M. Egerstedt, and A. Buffa, ``Containment control in mobile networks,'' \emph{IEEE Transactions on Automatic Control}, vol. 53, no. 8, pp. 1972-1975, Sept. 2008.
\bibitem{Cao12} Y. Cao, W. Ren, and M. Egerstedt, ``Distributed containment control with multiple stationary or dynamic leaders in fixed and switching directed networks,'' \emph{Automatica}, vol. 48, pp. 1586-1597, 2012.
\bibitem{Li16} W. Li, L. Liu, and G. Feng, ``Distributed containment tracking of multiple stochastic nonlinear systems,'' \emph{Automatica}, vol. 69, pp. 214-221, 2016.
\bibitem{Easl12} D. Easley and J. Kleinberg, ``Networks, crowds, and markets: Reasoning about a highly connected world,'' \emph{Significance}, vol. 9, pp. 43-44, 2012.
\bibitem{Facc11} G. Facchetti, G. Iacono, and C. Altafini, ``Computing global structural balance in large-scale signed social networks,'' \emph{Proceedings of the National Academy of Sciences}, vol. 108, no. 52, pp. 20953-20958, 2011.
\bibitem{Wass94} S. Wasserman and K. Faust, \emph{Social network analysis: Methods and applications}, Cambridge university press, 1994.
\bibitem{Alta13} C. Altafini, ``Consensus problems on networks with antagonistic interactions,'' \emph{IEEE Transactions on Automatic Control}, vol. 58, no. 4, pp. 935-946, Apr. 2013.
\bibitem{Valc14}  M. E. Valcher and P. Misra, ``On the consensus and bipartite consensus in high-order multi-agent dynamical systems with antagonistic interactions,'' \emph{Systems and Control Letters}, vol. 66, pp. 94-103, 2014.
\bibitem{Qin17} J. Qin, W. Fu, W. X. Zheng, and H. Gao, ``On the bipartite consensus for generic linear multiagent systems with input saturation,'' \emph{IEEE Transactions on Cybernetics}, vol. 47, no. 8, pp. 1948-1958, Aug. 2017.
\bibitem{Jiang17} Y. Jiang, H. Zhang, and J. Chen, ``Sign-consensus of linear multi-agent systems over signed directed graphs,'' \emph{IEEE Transactions on Industrial Electronics}, vol. 64, no. 6, pp. 5075-5083, Jun. 2017.
\bibitem{Hu19} J. Hu, Y. Wu, T. Li, and B. K. Ghosh, ``Consensus control of general linear multiagent systems with antagonistic interactions and communication noises,'' \emph{IEEE Transactions on Autoamtic Control}, vol. 64, no. 5, pp. 2122-2127, May 2019.
\bibitem{Meng18} D. Meng, ``Dynamic distributed control for networks with cooperative-antagonistic interactions,'' \emph{IEEE Transactions on Automatic Control}, vol. 63, no. 8, pp. 2311--2326, Aug. 2018.
\bibitem{Wang18} H. Wang, W. Yu, G. Wen, and G. Chen, ``Finite-time bipartite consensus for multi-agent systems on directed signed networks,'' \emph{IEEE Transactions on Circuits and Systems I: Regular Papers}, vol. 65, no. 12, pp. 4336-4348, Dec. 2018.
\bibitem{Lu20} J. Lu, Y. Wang, X. Shi, and J. Cao, ``Finite-time bipartite consensus for multiagent systems under detail-balanced antagonistic interactions,'' \emph{IEEE Transactions on Systems, Man, and Cybernetics: Systems}, to be published, DOI: 10.1109/TSMC.2019.2938419.
\bibitem{Meng16}D. Meng, M. Du, and Y. Jia, ``Interval bipartite consensus of networked agents associated with signed digraphs,'' \emph{IEEE Transactions on Automatic Control}, vol. 61, no. 12, pp. 3755-3770, Dec. 2016.
\bibitem{Xia16} W. Xia, M. Cao, and K. H. Johansson, ``Structural balance and opinion separation in trust-mistrust social networks,'' \emph{IEEE Transactions on Control of Network Systems}, vol. 3, no. 1, pp. 46-56, Mar. 2016.
\bibitem{Meng20} D. Meng, M. Du, and Y. Wu, ``Extended structural balance theory and method for cooperative-antagonistic networks,'' \emph{IEEE Transactions on Automatic Control}, vol. 65, no. 5, pp. 2147-2154, May 2020.
\bibitem{Meng17} D. Meng, ``Bipartite containment tracking of signed networks,'' \emph{Automatica}, vol. 79, pp. 282-289, 2017.
\bibitem{Du19} M. Du, B. Ma, and D. Meng, ``Algebraic criteria for structure identification and behaviour analysis of signed networks,'' \emph{International Journal of Systems Science}, vol. 50, no. 12, pp. 2333-2347, 2019.

\end{thebibliography}
\end{document}